\crefname{assumption}{assumption}{assumptions}
\crefname{thm}{theorem}{theorems}
\crefname{lem}{lemma}{lemmas}
\crefname{cor}{corollary}{corollaries}
\crefname{prop}{proposition}{propositions}
\Crefname{theorem}{Theorem}{Theorems}
\crefname{conjecture}{conjecture}{conjectures}
\newcommand{\cE}{\mathcal{E}}
\newcommand{\cA}{\mathcal{A}}
\newcommand{\R}{\mathbb{R}}
\newcommand{\eps}{\varepsilon}
\newcommand{\1}{\mathds{1}}
\let\phi\varphi
\DeclareMathOperator{\dist}{dist}
\newtheorem{thm}{Theorem}[section]
\newtheorem{prop}[thm]{Proposition}
\newtheorem{lem}[thm]{Lemma}
\newtheorem{conjecture}[thm]{Conjecture}
\newtheorem{rem}[thm]{Remark}
\date\today
\author{Emeric Bouin 
\footnote{CEREMADE - Universit\'e Paris-Dauphine, UMR CNRS 7534, Place du Mar\'echal de Lattre de Tassigny, 75775 Paris Cedex 16, France. E-mail: \texttt{bouin@ceremade.dauphine.fr}}\and
Christopher Henderson \footnote{Labex MILYON \& UMPA \& INRIA NUMED Team, Ecole Normale Sup\'erieure de Lyon, UMR CNRS 5669 'UMPA', 46 all\'ee d'Italie, F-69364~Lyon~cedex~07, France. E-mail: \texttt{christopher.henderson@ens-lyon.fr}}}
\begin{document}

\title{Super-linear spreading in local bistable cane toads equations}
\maketitle

\begin{abstract}
In this paper, we study the influence of an Allee effect on the spreading rate in a local reaction-diffusion-mutation equation modeling the invasion of cane toads in Australia. 
We are, in particular, concerned with the case when the diffusivity can take unbounded values.  We show that the acceleration feature that arises in this model with a Fisher-KPP, or monostable, non-linearity still occurs when this non-linearity is instead bistable, despite the fact that this kills the small populations.  This is in stark contrast to the work of Alfaro, Gui-Huan, and Mellet-Roquejoffre-Sire in related models, where the change to a bistable non-linearity prevents acceleration.
\end{abstract}
\noindent{\bf Key-Words:} {Structured populations, reaction-diffusion equations, front acceleration}\\
\noindent{\bf AMS Class. No:} {35Q92, 45K05, 35C07}

\section{Introduction}

\subsection{The biological model}

The invasion of cane toads in Australia has interesting features different from the standard spreading observed in 
most other
species. The experimental data~\cite{PhillipsBrownEtAl, Shine} show that the invasion speed has steadily increased 
during 
the eighty years since the toads were introduced in Australia.  In addition, the younger individuals at 
the edge of 
the invasion front have a significantly different morphology compared to other populations -- their legs 
tend to be on average much longer than away from the front. 
This is just one example of  a non-uniform space-trait 
distribution -- see, for instance, a study on the expansion of bush crickets in Britain~\cite{Thomas}.  Several works have addressed the  front 
invasions in ecology, where the trait is related to the
dispersal ability \cite{ArnoldDesvillettesPrevost,ChampagnatMeleard}. It has been observed
that selection of more mobile individuals can occur, even if they have no advantage 
in their reproductive rate, due to the spatial sorting 
\cite{Kokko,Ronce,PhillipsBrownEtAl,Simmons}.

In this paper, we focus on a model for this phenomenon (proposed in~\cite{BenichouEtAl},
based on the classical Fisher-KPP equation~\cite{Fisher,KPP} ).  The population density is structured by a spatial variable, $x\in \R$, and a 
motility variable $\theta\in \Theta\stackrel{\rm def}{=}[\underline\theta,\infty)$, with a  fixed   $\underline\theta>0$. 
This population undergoes diffusion in the trait variable $\theta$, with a constant diffusion coefficient $\alpha>0$, representing 
mutation, and in the spatial variable, with the diffusion coefficient~$\theta$, representing the effect of the trait  on the spatial spreading 
rates of the species. 

In the model proposed in~\cite{BenichouEtAl}, the reaction is of non-local, monostable type.  In the case where $\Theta$ is bounded, the problem has been shown to be well-posed~\cite{Turanova} and exhibit travelling-wave solutions~\cite{BouinCalvez}.  When $\Theta$ is unbounded, sharp results have been proven for the local analogue while weaker results have been obtained for the non-local equation~\cite{BerestyckiMouhotRaoul,BHR_acceleration}.  In particular, the Cauchy problem exhibits propagation of of the order $t^{3/2}$.

\subsection{Motivation}

With a monostable non-linearity, acceleration, i.e.~super-linear spreading of level sets, has been shown in various settings in mathematical biology~\cite{BBHHR, BouinCalvezNadin, CabreRoquejoffre, CCR, CoulonRoquejoffre, Garnier, HamelRoques, HarrisHarris, HendersonFast, MeleardMirrahimi, RoquejoffreTarfulea}.  We mention, in particular, the existence of acceleration for Fisher-KPP with fat-tailed initial data and for Fisher-KPP with the fractional Laplacian.  In these models, acceleration is related to the notion of pulled fronts.  In particular, we see that small populations far from the origin grow exponentially, causing the acceleration.  Recently, in these settings, it has been shown that when the non-linearity is ignition or bistable type, there is no acceleration~\cite{Alfaro, GuiHuan,MelletRoquejoffreSire}.  Indeed, the small populations that drive the acceleration are killed when the non-linearity is bistable.  In other words, acceleration is a tail phenomenon, so when a bistable non-linearity kills the tails, there is no acceleration.

In this article, we obtain results in stark contrast with the results discussed above.  Indeed, we demonstrate propagation of the order $t^{3/2}$ in the local cane toads equation with a bistable non-linearity.  This shows that, for the cane toads equation, acceleration is a bulk phenomenon, i.e. that it is not driven by small populations far from the origin.

\subsection{Main results}

Fix a bistable non-linearity $f: [0,1] \to [0,\infty]$ that is Lipschitz continuous function such that there exists $\alpha \in (0,\frac12)$ with
\[
	f(u) \geq u(u - \alpha)(1 - u).
\]
We assume also that $f(0) = f(1) = 0$.  We are interested in the long-time asymptotics of solutions to the Cauchy problem
\begin{equation}\label{eq:bistable}
	\begin{cases}
		u_t = \theta u_{xx} + u_{\theta\theta} + f(u), \qquad &(t,x,\theta) \in \R^+\times \R\times \Theta \medskip,\\
		u_\theta(t,x,\underline \theta) = 0, 	\qquad &(t,x)\in \R^+ \times \R, \medskip\\
		u(0,x,\theta) = u_0(x,\theta) &(x,\theta) \in \R \times \Theta.\\  
	\end{cases}
\end{equation}
The initial data function $u_0$ is assumed to satisfy
\begin{equation*}
u_0\geq \1_{\R^- \times[\underline\theta, (1+\lambda)\underline\theta]},
\end{equation*}
where $\lambda > 0$.  

Our interest is in understanding where the ``front'' of $u$ is.  In other words, we will fix a level set and understand its propagation.  Using the same equation with a monostable non-linearity as a super-solution to~\eqref{eq:bistable}, the results from~\cite{BerestyckiMouhotRaoul, BHR_acceleration} show that no level set can move faster than $O(t^{3/2})$.  Our main result is to show that a lower bound of the same order holds as well.

\begin{thm}\label{thm:acceleration}
	Fix any $m\in (0,1)$.  There exists $\lambda_0 \in \R_+^*$, depending only on $\alpha$ and $\underline\theta$, and $\gamma \in \R_+^*$, depending only on $\alpha$, such that if $\lambda \geq \lambda_0$ then
	\[
		\liminf_{t\to\infty} \; \frac{\max\{ x :  \exists \theta \in \Theta, \; u(t,x,\theta) = m\}}{t^{3/2}} \; \geq \gamma.
	\]
\end{thm}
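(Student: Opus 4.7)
The strategy is to iteratively construct a sequence of rectangles in $(x,\theta)$-space on which $u$ is bounded below by some value $\nu\in(\alpha,1)$; choosing $\nu > m$ (possible since $m<1$) will then imply the claim. The key observation is that the equation admits two constructive bistable spreading mechanisms: a traveling wave in the $\theta$-direction, moving at a constant speed $c_2>0$ because the diffusion coefficient in $\theta$ is $1$; and, for each frozen trait $\theta$, a bistable traveling wave in the $x$-direction moving at speed $c_1\sqrt{\theta}$ because the diffusion coefficient in $x$ is $\theta$. Both $c_1$ and $c_2$ depend only on the bistable profile $f$, hence only on $\alpha$.

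The heart of the argument is a one-step expansion lemma: there exist constants $c_1,c_2,K>0$ (depending only on $\alpha,\underline\theta$) and $\nu\in(\alpha,1)$ such that whenever $u(t_0,\cdot,\cdot)\geq\nu$ on a rectangle $(-A,A)\times[\underline\theta,\Theta]$ with $A,\Theta-\underline\theta\geq K$, then for every $\Delta t\geq 1$,
\[
u(t_0+\Delta t,x,\theta)\geq\nu\quad \text{on}\quad \bigl(-A-c_1\sqrt{\Theta}\,\Delta t,\,A+c_1\sqrt{\Theta}\,\Delta t\bigr)\times[\underline\theta,\Theta+c_2\Delta t].
\]
To prove the $\theta$-expansion we compare $u$ from below with a sub-solution built from a $1$-D bistable traveling wave $\phi(\theta-c_2 t)$ of $\phi_t=\phi_{\theta\theta}+f(\phi)$ multiplied by a fixed compactly-supported cutoff in $x$; the parasitic term $\theta u_{xx}$ is controlled because the cutoff width is fixed while $\phi \in [0,1]$. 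For the $x$-expansion at trait $\theta$ we use a sub-solution built from the $1$-D bistable wave $\psi$ evaluated at the rescaled coordinate $\xi=(x-c_1\sqrt{\theta}\,t)/\sqrt{\theta}$, combined with a cutoff in $\theta$ compatible with the Neumann boundary at $\underline\theta$ (handled by a reflection argument). The assumption that $A,\Theta-\underline\theta\geq K$ ensures that these sub-solutions fit inside $u$ and that the error terms are dominated by the bistable restoring force.

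Iterating the lemma with $\Delta t=1$ from a starting rectangle—whose existence is obtained by taking $\lambda_0\underline\theta\geq K$ large enough and running the equation for a finite warm-up time during which the initial Heaviside-type data is pushed above $\nu$ on a rectangle of size $\geq K$ (a standard bistable ignition step)—yields the recursions $\Theta_{n+1}=\Theta_n+c_2$ and $A_{n+1}=A_n+c_1\sqrt{\Theta_n}$. Hence
\[
\Theta_n\sim c_2 n, \qquad A_n\sim c_1\sum_{k=0}^{n-1}\sqrt{c_2 k}\sim \tfrac{2c_1\sqrt{c_2}}{3}\,n^{3/2},
\]
and since $t_n=t_0+n$, we obtain $A_n\geq \gamma\,t_n^{3/2}$ with $\gamma=\tfrac{2c_1\sqrt{c_2}}{3}$ depending only on $\alpha$, which is \Cref{thm:acceleration}.

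The main obstacle is the construction of the two sub-solutions in the one-step lemma, specifically controlling the cross-derivative $\theta u_{xx}$ when propagating the $\theta$-wave (since $\theta$ is not small at the top of the rectangle) and $u_{\theta\theta}$ when propagating the $x$-wave. This requires carefully tuned cutoffs whose widths and slopes keep the error terms strictly dominated by the bistable reaction. A secondary issue is reproducing the thickness hypothesis $A,\Theta-\underline\theta\geq K$ through the iteration, but this is automatic since both sequences are increasing; and the Neumann boundary at $\underline\theta$ is handled by even reflection of the $\theta$-sub-solution across $\underline\theta$.
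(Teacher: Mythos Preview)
Your overall scheme---iterate a one-step ``expand in $\theta$, then expand in $x$'' lemma to get $\Theta_n\sim c_2 n$ and $A_n\sim \tfrac{2c_1\sqrt{c_2}}{3}n^{3/2}$---is a reasonable alternative to the paper's construction. The paper does not iterate: it builds a single ``flying-saucer'' sub-solution supported on a moving ellipse $\cE_{t,\Lambda}\cup\cA_{t,\Lambda}$, slides it along a prescribed trajectory $(X_T,\Theta_T)$ (first purely in $\theta$, then purely in $x$), and the main work goes into matching the inner piece $w^+$ and the outer piece $w^-$ so that the normal derivatives are correctly ordered across $\partial\cE_{t,\Lambda}$. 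So the two approaches are organized quite differently.

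However, there is a genuine gap in your $\theta$-propagation step. You propose a sub-solution of the form $v(t,x,\theta)=\phi(\theta-c_2 t)\,\chi(x)$ with a \emph{fixed} compactly supported cutoff $\chi$, and you claim ``the parasitic term $\theta u_{xx}$ is controlled because the cutoff width is fixed while $\phi\in[0,1]$.'' This is not correct. Plugging $v$ into the operator and using $-c_2\phi'=\phi''+f(\phi)$ gives the sub-solution inequality
\[
f(\phi)\chi - f(\phi\chi)\ \le\ \theta\,\phi\,\chi''.
\]
At the step where the front sits at height $\Theta_n\sim c_2 n$, the support of $v$ in $\theta$ includes values $\theta\approx\Theta_n$; with $\chi$ of fixed width, $|\chi''|$ is of order one, so $\theta\phi\chi''$ is of order $\Theta_n\to\infty$ (and has the wrong sign wherever $\chi$ is concave), while the left side is bounded. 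The inequality fails for large $n$. The boundedness of $\phi$ does not help---the unbounded factor is $\theta$, not $\phi$. The same issue resurfaces in your $x$-propagation step: the $u_{\theta\theta}$ term coming from a fixed-width cutoff in $\theta$ is of order one, but you are trying to dominate it by a bistable reaction of order one as well, and the calculation is delicate precisely because the product $\psi\eta$ falls below $\alpha$ near the edge of the cutoff.

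The fix is to let the $x$-cutoff width scale like $\sqrt{\Theta_n}$, so that $\theta\chi''$ stays $O(1)$; this fits inside your rectangle since $A_n\sim n^{3/2}\gg \sqrt{\Theta_n}\sim n^{1/2}$. But note that this is exactly the rescaling $y=(x-X_T)/\sqrt{\Theta_T}$ the paper uses to turn the moving ellipse into a fixed disc, and once you make that change your product sub-solution $\phi\chi$ is no longer obviously below $u$ in the bistable zone $\{\phi\chi<\alpha\}$; you are back to needing a construction like the paper's inner/outer patching (or something equivalent) to handle the region where the sub-solution dips below $\alpha$ and $f$ becomes negative. So the plan as written does not yet close; the sentence ``carefully tuned cutoffs whose widths and slopes keep the error terms strictly dominated by the bistable reaction'' hides exactly the difficulty the paper spends Sections~3--5 resolving.
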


Before we continue, we note that we expect that this theorem holds in much greater generality.  However, our interest is in showing that the acceleration in the cane toads equation is a also bulk phenomenon, as opposed to a tail phenomenon.  Hence, we seek to simply show that acceleration occurs despite the bistable non-linearity, and we do not attempt to find the most general setting or the sharpest bounds.

To prove this theorem, we will follow a similar strategy as in a previous paper by the authors and Ryzhik~\cite{BHR_acceleration} and that also appears in~\cite{BerestyckiHamelNadin,RR-Toulouse}. In rough words, we will slide a suitable ``bump" along suitable trajectories in the phase plane $\R \times \Theta$, making sure that the ball remains below the solution of the original Cauchy problem \eqref{eq:bistable}. Once the trajectories are well chosen, this will imply the acceleration phenomena claimed in \cref{thm:acceleration}.  However, the ``bump'' sub-solution is significantly more complicated to create in this setting than in the monostable one.  Indeed, in~\cite{BHR_acceleration}, the sub-solution is created almost entirely with the linearized (around zero) problem.  In our setting, of course, the linearized problem decays to zero on any traveling ball.  We describe how to overcome this difficulty below.  This is the main technical difficulty in the present article.

\subsection{Strategy of the proof: constructing the sub-solution}

We will now end this introduction by describing and presenting more precisely the major objects that are needed to prove \cref{thm:acceleration}, namely the suitable trajectory and ``bump". To begin, we fix a large $T> 0$ and any level set height $m \in (0,1)$. We then define a trajectory in the space $\R \times \Theta$ by 
\begin{equation*}
t \mapsto (X_T(t),\Theta_T(t)),
\end{equation*}
for any $t \in [0,T]$.  The coordinate functions $(X_T,\Theta_T)$ will be defined later on (see \cref{sec:proof}), and are crucial for our analysis. In order to slide a bump along this trajectory, we define the moving (growing) ellipse
\begin{equation}\label{eq:ellipse}
	\mathcal{E}_{t,\Lambda} = \left\{ (x,\theta) \in \R\times \Theta : \frac{(x - X_T(t))^2}{\Theta_T(t)} + (\theta - \Theta_T(t))^2 \leq \Lambda^2 \right\}
\end{equation}
and the moving (growing) annulus
\begin{equation}\label{eq:annulus}
	\mathcal{A}_{t,\Lambda} =  \left\{ (x,\theta) \in \R\times \Theta : \Lambda^2 \leq \frac{(x -X_T(t))^2}{\Theta_T(t)} + (\theta - \Theta_T(t))^2 \leq 4\Lambda^2 \right\},
\end{equation}
where $\Lambda$ is a positive constant to be chosen later that encodes the sizes of this two objects.  For now, our only assumption on $\Lambda$ is that $\Lambda \leq \lambda \underline \theta / 8$.  Our goal is to build a sub-solution to~\eqref{eq:bistable} on the bigger ellipse $\mathcal{A}_{t,\Lambda}\cup \mathcal{E}_{t,\Lambda}$, what we called a ``bump'' above.

Since it is the main issue of the paper, let us now explain how we build a sub-solution on $\mathcal{A}_{t,\Lambda}\cup \mathcal{E}_{t,\Lambda}$. We shall patch together a solution $w^+ \geq \alpha$ on $\mathcal{E}_{t,\Lambda}$ of
\begin{equation}\label{eq:w+}
	\begin{cases}
w^+_t = \theta w^+_{xx} + w^+_{\theta\theta} + f_r(w^+), \qquad &(x,\theta) \in \mathcal{E}_{t,\Lambda}, \medskip\\
w^+ \equiv \alpha, \qquad &(x,\theta) \in \partial \mathcal{E}_{t,\Lambda},
	\end{cases}
\end{equation}
and a positive solution $w^- \leq \alpha$ on $\mathcal{A}_{t,\Lambda}$ of
\begin{equation}\label{eq:w-}
	\begin{cases}
		w^-_t = \theta w^-_{xx} + w^-_{\theta\theta} + f_r(w^-), \qquad & (x,\theta) \in \mathcal{A}_{t,\Lambda}, \medskip\\
w^- \equiv \alpha, & (x,\theta) \in \partial \mathcal{E}_{t,\Lambda}, \medskip\\
		w^- \equiv 0,
			& (x,\theta) \in \partial \mathcal{A}_{t,\Lambda} \setminus \partial \mathcal{E}_{t,\Lambda}.
	\end{cases}
\end{equation}

Notice that in these two previous definitions, there is a small but important discrepancy between the nonlinearities used and in \eqref{eq:bistable}. The new non-linearity $f_r$ is defined as follows for any $r \in (2\alpha, 1]$:
\begin{equation}\label{eq:f_r}
	f_r(u) \stackrel{\rm def}{=}
		\begin{cases}
			u( u - \alpha)(1 - u), ~~~~&\text{ for } u \leq \alpha,\medskip\\
			c_r u( u - \alpha)(r - u), &\text{ for } u \geq \alpha.
		\end{cases}
\end{equation}
where we define $c_r = (1-\alpha)(r-\alpha)^{-1}$.  It is easily verified that $f_r(u) \leq f(u)$ for all $u$, and it is clear that $f_r$ is Lipschitz continuous.  For technical reasons, we are required to take $r$ to be slightly less than $1$ in the sequel.


\begin{figure}[h]
\begin{center}
\begin{overpic}
		{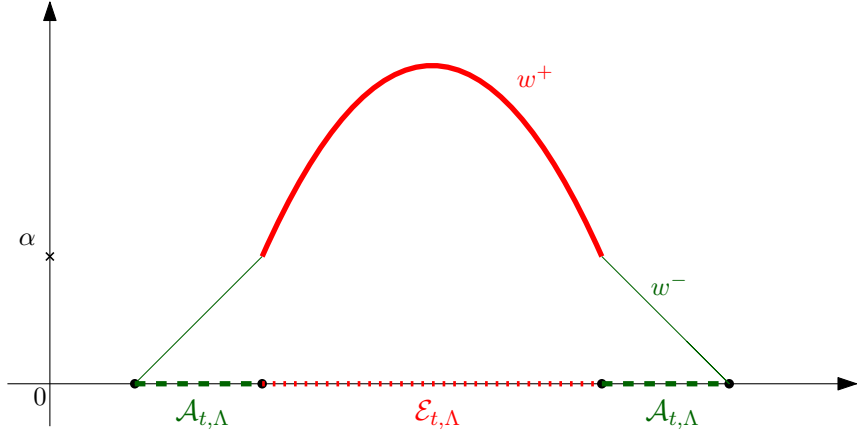}
	\put (75,1){{\color[rgb]{0,.4,0}$\mathcal{A}_{t,\Lambda}$}}
	\put (20,1){{\color[rgb]{0,.4,0}$\mathcal{A}_{t,\Lambda}$}}
	\put (48,1){{\color{red}$\mathcal{E}_{t,\Lambda}$}}
\end{overpic}
\caption{Schematic plot of the flying saucer like sub-solution $w$ that is slid along the trajectory. The bolded green line denotes the set $\mathcal{A}_t$, the red dotted one the set $\mathcal{E}_t$.}
\end{center}
\end{figure}
Now that we have defined $w^{\pm}$, we obtain a sub-solution $w$ of \eqref{eq:bistable} on $\R^+ \times \R \times \Theta$ defined by
\begin{equation}\label{eq:w}
	w(t,x,\theta) = w^+ \1_{\mathcal{E}_{t,\Lambda}} + w^- \1_{\mathcal{A}_{t,\Lambda}}.
\end{equation}
By construction, $w$ is Lipschitz continuous. However, it need not be a $C^1$ function along $\partial \mathcal{A}_{t,\Lambda} \setminus \partial \mathcal{E}_{t,\Lambda}$ and $\partial \mathcal{E}_{t,\Lambda}$.  In order to be a sub-solution, we must check the convexity of $w$ along both boundaries.  The positivity of $w^-$ ensures that the convexity of $w$ is correct at the boundary $\partial \mathcal{A}_{t,\Lambda} \setminus \partial \mathcal{E}_{t,\Lambda}$. However, to make sure that it is indeed a sub-solution at the boundary $\partial \mathcal{E}_{t,\Lambda}$ we need to check properly that the normal derivatives along this boundary are well ordered:
%
%
\begin{equation}\label{eq:good_convexity}
	|\partial_n w^+| \geq |\partial_n w^-| ~~~~ \text{ on } \partial \mathcal{E}_{t,\Lambda},
\end{equation}
where $\partial_n$ is the (outward) normal derivative to the boundary of $\mathcal{E}_{t,\Lambda}$. This is the main technical issue at hand in the proof.

\subsection{Numerics and comments}

\subsection*{Numerics}

Let us enlighten \cref{thm:acceleration} by showing in \cref{fig:Shape} some numerical insights of the acceleration behavior. One can compare these to \cite{BCMetal,BerestyckiMouhotRaoul} that are numerics related to the study of the acceleration in the nonlocal cane toads equation.

\begin{figure}
\begin{center}
\includegraphics[width = .45\linewidth]{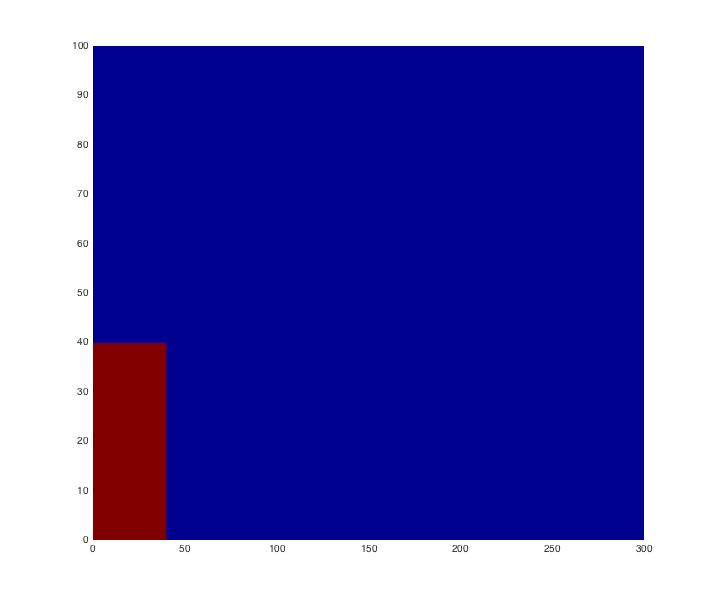} \; 
\includegraphics[width = .45\linewidth]{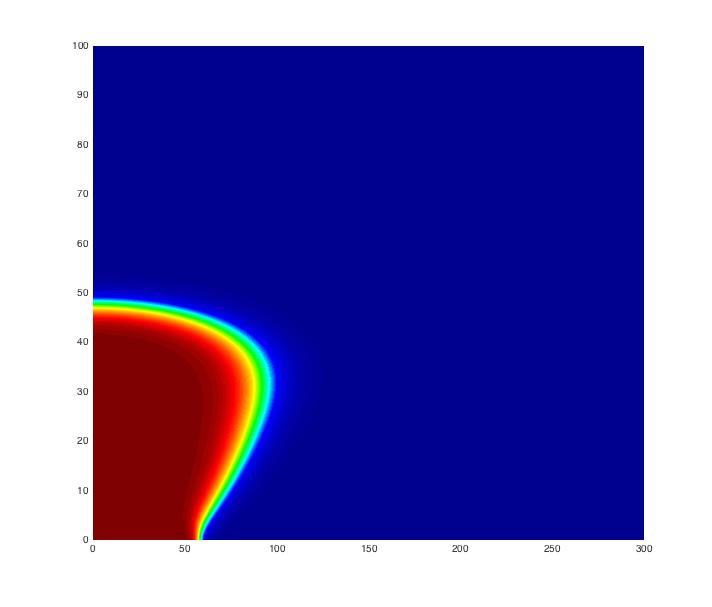}
\includegraphics[width = .45\linewidth]{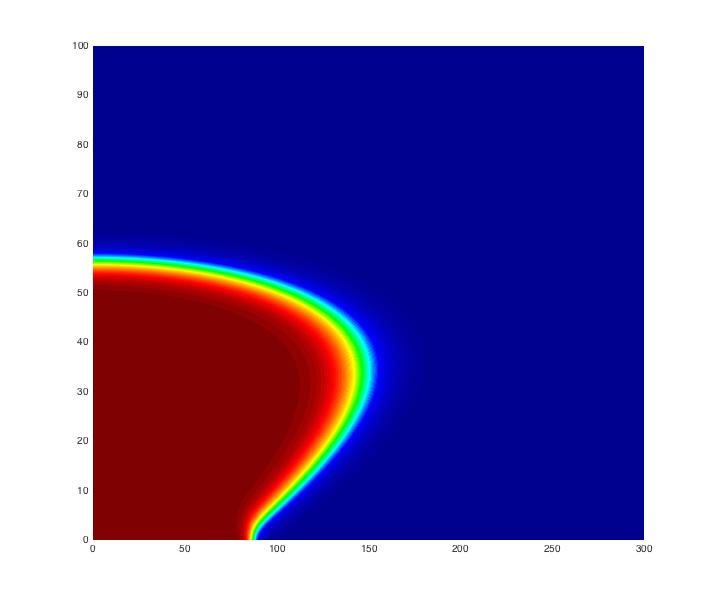} \; 
\includegraphics[width = .45\linewidth]{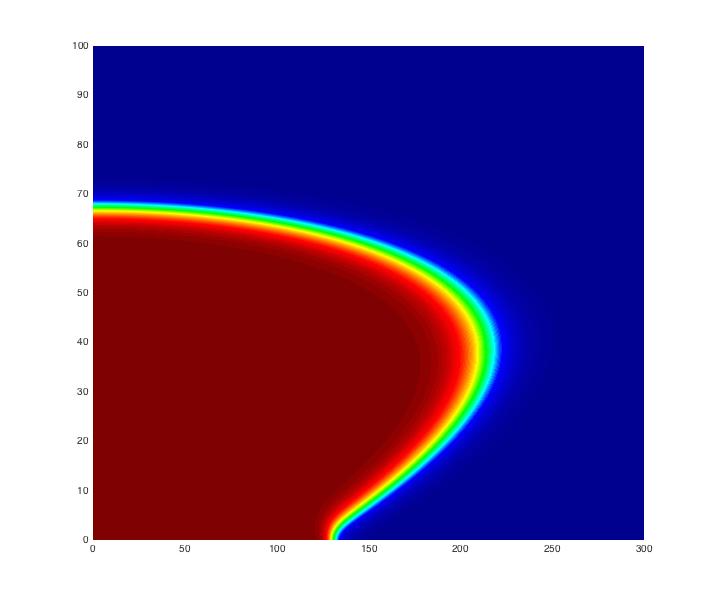} \;
\includegraphics[width = .45\linewidth]{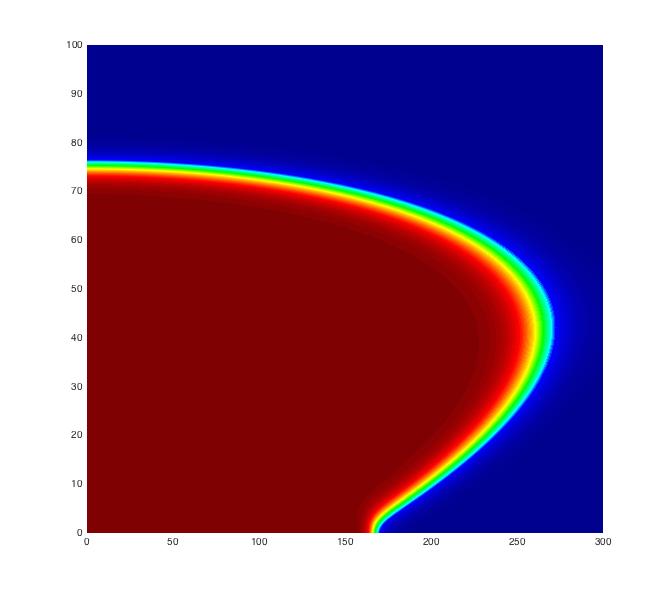}
\caption{Numerical simulations of the Cauchy problem of equation \eqref{eq:bistable} at a fixed time, in the phase space $\R\times\Theta$. From top left to bottom right: \textit{the initial data}, $t=30$, $t=60$, $t=90$, $t=120$. One can track the accelerated behavior, for example on the space axis. The invasion at the back in the $\theta$-direction is expected to be linear in time. This pattern is very similar as for the monostable cane toads equation, see also~\cite{BCMetal,BerestyckiMouhotRaoul,BHR_acceleration}.}
\label{fig:Shape}
\end{center}
\end{figure}

\subsection*{A nonlocal bistable model}

We conclude the introduction by discussing a nonlocal version of \eqref{eq:bistable}: 
\begin{equation}\label{eq:nlbistable}
	\begin{cases}
		n_t = \theta n_{xx} + n_{\theta\theta} + n(\rho - \alpha)(1-\rho), \qquad &(t,x,\theta) \in \R^+\times \R\times \Theta \medskip,\\
		\rho(t,x) := \displaystyle \int_{\underline \theta}^{\infty} n(t,x,\theta) \, d\theta, \qquad &(t,x) \in \R^+\times \R \medskip,\\
		n_\theta(t,x,\underline \theta) = 0, 	\qquad &(t,x)\in \R^+ \times \R, \medskip\\
		n(0,x,\theta) = n_0(x,\theta) &(x,\theta) \in \R \times \Theta.\\  
	\end{cases}
\end{equation}

It is easy to see that one many bound the propagation rate above by using the monostable model with growth rate $r = \sup_{\rho \in \R^+} \left( (\rho - \alpha)(1-\rho) \right)$.  This implies that the propagation can be no faster than $O(t^{3/2})$.  However, obtaining a lower bound is significantly more complicated.  We are led to the following conjecture, which we are unable to prove at this time.

\begin{conjecture}
	The model~\eqref{eq:nlbistable} exhibits acceleration.  That is, the level sets of $\rho$ move super-linearly in time.
\end{conjecture}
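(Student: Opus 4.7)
The natural plan is to import the sliding-bump strategy used for \cref{thm:acceleration}, replacing the local reaction $f$ by the nonlocal one $(\rho-\alpha)(1-\rho)$. The central obstacle is immediate: the sign of the reaction is controlled by $\rho(t,x)=\int_{\underline\theta}^{\infty} n(t,x,\theta)\,d\theta$, not by $n$ itself, so a prospective sub-solution can fail to grow not because $n$ is small but because $\rho$ happens to lie outside the Allee interval $(\alpha,1)$. Any naive attempt to slide a bump supported in a single ellipse $\mathcal{E}_{t,\Lambda}$ is doomed whenever the bump's own mass drives $\rho$ above $1$ or the bump fails to find enough surrounding mass to keep $\rho$ above $\alpha$.

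I would split the argument into two phases. First, a \emph{bulk stabilization}: starting from suitable initial data, show that $\rho$ is trapped in some closed interval $[\rho_-,\rho_+]\subset(\alpha,1)$ on a space-time region growing at least linearly in time. The heuristic is to compare $n$ from below with the solution of a truncated auxiliary equation whose reaction is a Fisher--KPP-type term activated only where $\rho\leq \rho_+$, so that diffusion in both $x$ and $\theta$ is enough to push $\rho$ across the Allee threshold $\alpha$ on a growing set. Behind this saturation front, the nonlocal reaction reduces to a non-degenerate positive coefficient times $n$. Second, a \emph{front propagation} step: on the region where $\rho\in[\rho_-,\rho_+]$, attempt to build a flying-saucer sub-solution exactly as in \eqref{eq:w}, sliding along a trajectory $(X_T(t),\Theta_T(t))$ of the same form as used for \cref{thm:acceleration}. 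Patched to the saturated bulk from the first phase, this would formally produce the $t^{3/2}$ lower bound on the level sets of $\rho$.

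The hard part, and the reason this appears as a conjecture, is the self-consistent estimation of $\rho$ at the very tip of the front. A bump of amplitude close to $1$ concentrated in the trait window $[\Theta_T(t)-\Lambda,\Theta_T(t)+\Lambda]$ contributes an amount of order $\Lambda$ to $\rho$ at the corresponding spatial positions, so keeping $\rho \le 1$ forces $\Lambda$ to be small; but shrinking $\Lambda$ destroys the curvature budget needed to satisfy the normal-derivative inequality \eqref{eq:good_convexity}, since the interior solution $w^+$ of \eqref{eq:w+} and the boundary layer $w^-$ of \eqref{eq:w-} both degenerate when the ellipse is too thin. Reconciling these competing constraints seems to require either a refined rescaling in $\theta$ that preserves enough coercivity as $\Lambda \to 0$, or an entirely different lower-bound route, for instance a probabilistic argument via a branching Brownian motion with trait-dependent spatial diffusion killed according to a local density functional, where the Allee killing must be balanced against the accelerated spreading of high-trait particles. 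Either direction is substantial enough that the statement is offered here only as a conjecture.
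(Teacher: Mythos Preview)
The statement under review is a \emph{conjecture}, and the paper does not prove it; the authors explicitly write that ``the proof of such a result is far beyond the scope of this paper'' and leave it for further investigation. Your proposal correctly recognizes this and does not pretend to be a proof: you sketch a two-phase strategy (bulk stabilization of $\rho$ into an interval $[\rho_-,\rho_+]\subset(\alpha,1)$, followed by a sliding-bump argument on that saturated region), identify the central obstruction, and conclude that the statement remains conjectural. In that sense there is nothing to grade for correctness.

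It is still worth comparing your diagnosis of the difficulty with the paper's. The paper locates the obstruction in the need to track the zone $\{\rho>\alpha\}$, which sits behind the front rather than at its edge, so that the contradiction-style arguments of~\cite{BHR_acceleration} and the probabilistic representations of~\cite{BerestyckiMouhotRaoul} both break down. Your account is more quantitative and identifies a sharper tension: a bump of height close to $1$ on a trait window of width $\Lambda$ contributes mass of order $\Lambda$ to $\rho$, so keeping $\rho\leq 1$ forces $\Lambda$ small, while the normal-derivative ordering~\eqref{eq:good_convexity} requires $\Lambda$ large. This is a genuine and well-articulated obstacle that the paper does not spell out. Your suggested remedies (a refined $\theta$-rescaling, or a branching-process approach with density-dependent killing) are plausible directions but, as you acknowledge, each would require substantial new ideas. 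The upshot is that your write-up is an appropriate and somewhat more detailed companion to the paper's own brief discussion of why the conjecture is left open.
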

We provide some numerics in \cref{fig:nlshape} to support this conjecture. However, the proof of such a result is far beyond the scope of this paper. Indeed, it requires to study the dynamics of the zone where $\rho$ is greater that $\alpha$, which is not at the edge of the front at all, so that an argument by contradiction as in \cite{BHR_acceleration} is impossible. Moreover, to our knowledge, it is not possible to derive a probabilistic framework such as in \cite{BerestyckiMouhotRaoul} (and the references therein) to be able to study the nonlocal model~\eqref{eq:nlbistable}. 
As new tools will be needed to study this model, we thus leave this conjecture for further investigation.  

\begin{figure}
\begin{center}
\includegraphics[width = .45\linewidth]{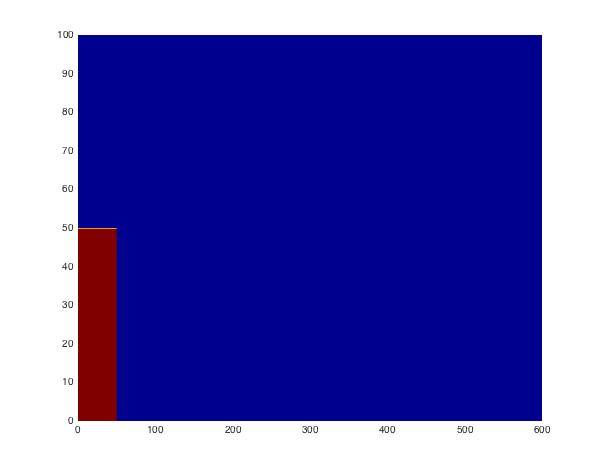} \; 
\includegraphics[width = .45\linewidth]{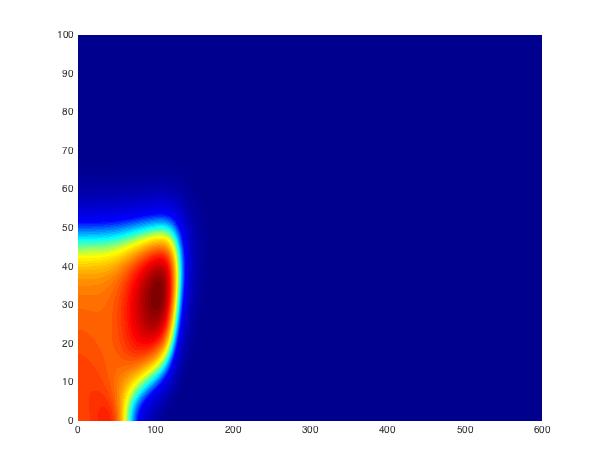}
\includegraphics[width = .45\linewidth]{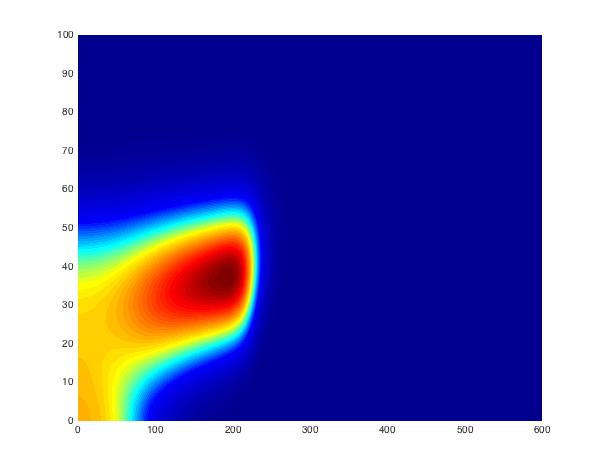} \; 
\includegraphics[width = .45\linewidth]{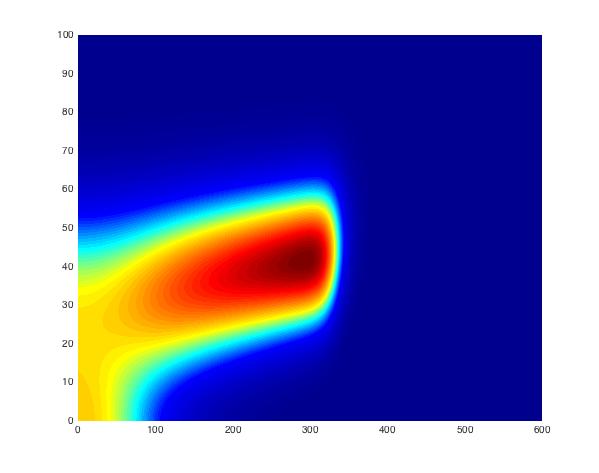} \;
\includegraphics[width = .45\linewidth]{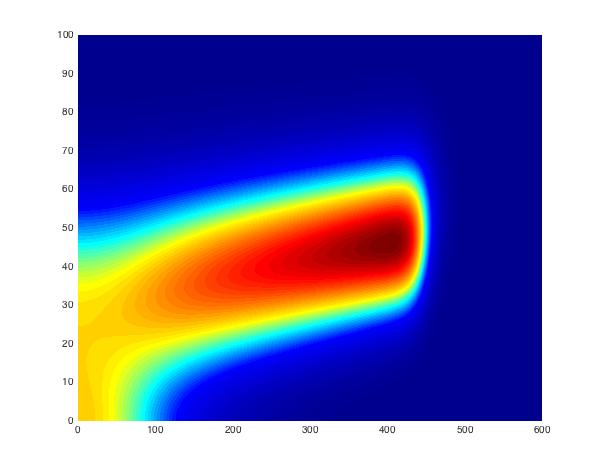}
\caption{Numerical simulations of the Cauchy problem of equation \eqref{eq:nlbistable} at a fixed time, in the phase space $\R\times\Theta$. From top left to bottom right: \textit{the initial data}, $t=30$, $t=60$, $t=90$, $t=120$. Here we see the distinctive "up-and-over" behavior that leads to acceleration for the non-local, monostable model.  One can track the accelerated behavior, for example on the space axis.}
\label{fig:nlshape}
\end{center}
\end{figure}

\subsection*{Outline of the article}
In \cref{sec:proof}, we give the main outline of the proof of \cref{thm:acceleration} assuming the main \cref{prop:enough_mass}. In \cref{sec:lemma}, we prove this proposition by finishing the construction of the subsolution \eqref{eq:w}, and reducing it to showing a \cref{prop:subsol} that gives \eqref{eq:good_convexity}. This main lemma involves understanding a related steady state problem, which we analyze in \cref{sec:steady,sec:convergence_steady}.

\subsection*{Acknowledgments}  EB is very grateful to Cambridge University for its hospitality during the second semester of the academic year 2015-2016. EB and CH acknowledge the support of the ERC Grant MATKIT (ERC-2011-StG).  Part of this work was performed within the framework of the LABEX MILYON (ANR-10-LABX-0070) of Universit\'e de Lyon, within the program ``Investissements d'Avenir" (ANR-11-IDEX-0007) operated by the French National Research Agency (ANR).  In addition, this project has received funding from the European Research Council (ERC) under the European Unions Horizon 2020 research and innovation programme
(grant agreement No 639638).

\section{The acceleration feature - proof of \cref{thm:acceleration}}\label{sec:proof}

In this section, we prove \cref{thm:acceleration}. For this purpose, we first define carefully the trajectory that has been introduced in the previous section. Second, we take for granted the existence of a sub-solution of the form \eqref{eq:w} (this will the object of \cref{sec:lemma}) and we show how this implies the acceleration feature stated in \cref{thm:acceleration}.

%
%
We now define the trajectory $t \in [0,T] \mapsto (X_T(t),\Theta_T(t)) \in \R \times \Theta$. For this purpose, we concatenate two trajectories $t \mapsto (X_T^{(1)}(t),\Theta_T^{(1)}(t))$ and $t \mapsto (X_T^{(2)}(t),\Theta_T^{(2)}(t))$:
\begin{equation*}
X_T = 
\begin{cases}
X_T^{(1)}, & t \in \left[0,\frac{T}{2}\right],\medskip\\
X_T^{(2)}, & t \in \left[\frac{T}{2},T\right],\\
\end{cases}
\qquad \Theta_T = 
\begin{cases}
\Theta_T^{(1)}, & t \in [0,\frac{T}{2}],\medskip\\
\Theta_T^{(2)}, & t \in [\frac{T}{2},T].\\
\end{cases}
\end{equation*}
These latter trajectories are defined by
\begin{equation*}
\forall t \in \left[0,\frac{T}{2} \right],
	\qquad \left(X_T^{(1)},\Theta_T^{(1)}\right)(t)
	= \left(  - \frac{\lambda}{4} \underline \theta ,~ c t + \left( 1 + \frac{3\lambda}{4} \right) \underline\theta\right),
\end{equation*}
and
\begin{equation*}
\forall t \in \left[\frac{T}{2},T \right], \qquad 
	\left(X_T^{(2)},\Theta_T^{(2)}\right)(t)
	= \left( \frac{c  \left(t - \frac{T}{2} \right)T}{\sqrt{\frac{c T}{2} + \left( 1 + \frac{3\lambda}{4} \right) \underline\theta}} - \frac{\lambda}{4} \underline \theta , \frac{c T}{2} + \left( 1 + \frac{3\lambda}{4} \right) \underline\theta \right). 
\end{equation*}

\begin{figure}[h]
\begin{center}
\includegraphics[width=.6\linewidth]{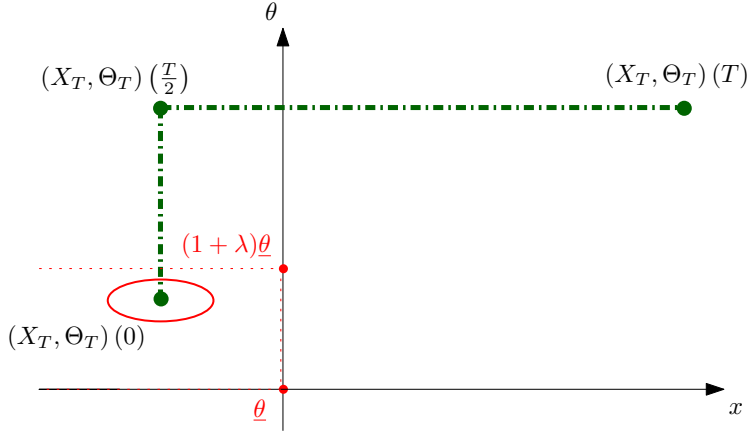}
\end{center}
\caption{Plot of the trajectory in the phase plane $(x,\theta)$. We emphasize that the first part of the trajectory corresponds to a movement towards large traits only in the $\theta$-direction, whereas the second part of the trajectory corresponds to a movement towards large space positions only in the $x$-direction, \textit{at an accelerated spreading rate}. The red dotted line is the support of the initial condition. The red bold line is the initial support of the subsolution $w$ : it sits inside the support of the initial condition.}
\end{figure}

In related works \cite{BCMetal,BHR_acceleration} about the standard local and non-local cane toads equations, some trajectories of the same kind have been introduced.  

Before continuing, we point out that our choice of $\Lambda \leq \lambda \underline \theta / 8$ gives us that
\[
	\cE_{0,\Lambda}\cup \cA_{0,\Lambda}
		\subset (-\infty, 0] \times [\underline\theta, (1+\lambda)\underline\theta].
\]
This implies that $w$ will initially sit below $u$, allowing us to apply the maximum principle.  With this in mind, we state in (a very minimalist fashion) a proposition that will imply our main \cref{thm:acceleration}.

\begin{prop}\label{prop:enough_mass}
Fix $m\in (0,1)$.  There exist positive constants $\lambda_0$, $\Lambda_0$, $T_0$, and $c_0$, such that if $\Lambda \geq \Lambda_0$, $T\geq T_0$, and $c \leq c_0$, then there exists a sub-solution $w$ of the form \eqref{eq:w} satisfying~\eqref{eq:good_convexity} such that 
%
%
%
%
\begin{equation}\label{eq:lower_bound}
\forall (x,\theta) \in \mathcal{E}_{T, \frac{\Lambda}{2}}, \qquad w^+(T,x,\theta) \geq m.
\end{equation}
In addition, all constants $\lambda_0$, $\Lambda_0$, $c_0$, and $T_0$ are bounded for $m$ bounded away from $1$, and $\lambda_0$ depends on $\Lambda$.
\end{prop}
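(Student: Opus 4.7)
The plan is to construct $w^{\pm}$ as the unique solutions of the parabolic Dirichlet problems \eqref{eq:w+}--\eqref{eq:w-} via standard parabolic theory, and then to reduce both the normal derivative inequality \eqref{eq:good_convexity} and the quantitative lower bound \eqref{eq:lower_bound} to statements about stationary solutions on a fixed ball and a fixed annulus. The argument proceeds in three logical steps: a change of variables into a co-moving frame, a comparison with unperturbed steady states, and a perturbation estimate that transfers the stationary picture to the parabolic one.

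The first step is to set $y = (x - X_T(t))/\sqrt{\Theta_T(t)}$ and $\eta = \theta - \Theta_T(t)$. Under this substitution, $\cE_{t,\Lambda}$ becomes the fixed disk $B_\Lambda := \{y^2 + \eta^2 \leq \Lambda^2\}$, $\cA_{t,\Lambda}$ becomes the fixed annulus $A_\Lambda := \{\Lambda^2 \leq y^2 + \eta^2 \leq 4\Lambda^2\}$, and a direct calculation shows that the operator $\partial_t - \theta\partial_{xx} - \partial_{\theta\theta}$ becomes $\partial_t - \Delta_{y,\eta}$ plus perturbation terms coming from (i) the moving frame (contributing drifts involving $\dot X_T$ and $\dot\Theta_T$), (ii) the time-dependent rescaling (a term $y\dot\Theta_T/(2\Theta_T)\,\partial_y$), and (iii) the replacement $\theta/\Theta_T = 1 + \eta/\Theta_T$ in the coefficient of $\partial_{yy}$. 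On the first segment of the trajectory, all of these corrections are of order $c$ or $1/(\lambda\underline\theta)$, hence small when $c$ is small and $\lambda\underline\theta$ is large. On the second segment $\dot\Theta_T = 0$, but the drift $\dot X_T^{(2)}/\sqrt{\Theta_T^{(2)}}$ in the $y$-direction is of order one; this forces the steady state comparison below to be carried out for a bistable equation \emph{with drift}.

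The second step is to introduce the steady state problems $\Delta V + f_r(V) = 0$ (together with the order-one drift, on the second segment) on the fixed $B_\Lambda$ and $A_\Lambda$, with the boundary data of \eqref{eq:w+}--\eqref{eq:w-}. Calling these stationary profiles $V^\pm$, two ingredients are required, both established in \cref{sec:steady,sec:convergence_steady}: (a) for $\Lambda$ large enough, $V^+ \geq m$ on $B_{\Lambda/2}$ --- this follows from the standard bistable fact that Dirichlet steady states on large balls saturate close to the stable root $r$, which can be taken close to $1$ by our choice of $f_r$; and (b) the normal derivative inequality $|\partial_n V^+|\geq |\partial_n V^-|$ on $\partial B_\Lambda$, which is the stationary form of \eqref{eq:good_convexity} and is stated below as \cref{prop:subsol}. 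Initializing $W^\pm$ at time zero in the moving frame equal to $V^\pm$, parabolic regularity and stability then give $\|W^\pm(t,\cdot) - V^\pm\|_{C^1}$ small uniformly on $[0,T]$, with the smallness controlled by the perturbation coefficients from the first step. The constants are then chosen in sequence: $\Lambda_0$ large enough that (a) delivers a strict margin over $m$ and (b) a strict gap, $T_0$ large and $c_0$ small so that the perturbation stays within those margins, and finally $\lambda_0 \geq 8\Lambda/\underline\theta$ so that $\cE_{0,\Lambda}\cup\cA_{0,\Lambda}$ fits inside the support of $u_0$.

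The chief obstacle is point (b). The function $V^+$ on $B_\Lambda$ climbs from $\alpha$ at the boundary up to values near $r$ in the interior, while $V^-$ on $A_\Lambda$ drops from $\alpha$ at the inner boundary down to $0$ at the outer boundary. Because $f_r \leq 0$ on $[0,\alpha]$, the function $V^-$ is subharmonic on $A_\Lambda$ and so its drop from $\alpha$ to $0$ is at least as steep as the harmonic interpolation, making the required inequality genuinely non-trivial. Heuristically, the one-dimensional first integral $\frac{1}{2}|V'|^2 + F_r(V) = \mathrm{const}$, with $F_r(u)=\int_0^u f_r(s)\,ds$, gives $|\partial_n V^+|^2 \approx 2(F_r(r) - F_r(\alpha))$ and $|\partial_n V^-|^2 \approx -2F_r(\alpha)$, so the required inequality reduces to $F_r(r)\geq 0$ --- the Maxwell-type condition that the $0$-state be invaded by the stable state. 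Since $\alpha < 1/2$, this holds for $r$ chosen sufficiently close to $1$. Turning this heuristic into a rigorous argument on the actual two-dimensional disk and annulus --- including the drifted version on the second trajectory segment --- is the content of \cref{prop:subsol} and will occupy \cref{sec:steady,sec:convergence_steady}.
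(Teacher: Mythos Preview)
Your outline is essentially the paper's own strategy: pass to the moving frame $(y,\eta)$, compare $w^\pm$ (equivalently $v^\pm=\pm(w^\pm-\alpha)$) with stationary profiles on the fixed ball and annulus, invoke a $C^1$-closeness lemma to transfer properties of the steady states to the parabolic solutions, and defer the normal-derivative inequality to \cref{prop:subsol}. Your Maxwell-type heuristic $F_r(r)\geq 0$ for the one-dimensional slope comparison is a helpful intuition the paper does not make explicit.

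The one structural element you miss is that the paper builds $w$ in \emph{two stages}, restarting at $t=T/2$. On $[0,T/2]$ it takes $r_{(1)}=1$ and initial data $\phi^\pm_{(1)}$, obtaining $u(T/2,\cdot)\geq 1-2\epsilon$ on $\cE_{\Lambda/2}$; it then discards that sub-solution and launches a \emph{new} one on $[T/2,T]$ with $r_{(2)}\in(\max\{m,2\alpha\},1-2\epsilon)$, halved radius $\Lambda'=\Lambda/2$, and fresh initial data $\phi^\pm_{(2)}$. This restart is what lets \cref{lem:convergence_steady} be applied separately on each leg---each $v^\pm_{(i)}$ begins \emph{at} its own steady state, so $C^1$-closeness (and hence \eqref{eq:good_convexity}) holds for all $t$---and the choice $r_{(2)}<1-2\epsilon$ with the halved radius is precisely what forces the second bump to sit below $u(T/2,\cdot)$. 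Your single-initialization picture would instead have to show that the normal-derivative ordering survives the jump in drift coefficients at $t=T/2$, when $v^\pm$ is still near $\phi^\pm_{(1)}$ but the ambient operator has already switched to segment-2 coefficients; the paper avoids this by restarting. Finally, the ``parabolic regularity and stability'' step you invoke is exactly \cref{lem:convergence_steady}, which is a genuine lemma (proved via asymptotic uniqueness of the half-plane problem, \cref{lem:uniqueness,lem:close_steady}) rather than a soft consequence of regularity.
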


We will prove this proposition in \cref{sec:lemma}. Let us now conclude the proof of \cref{thm:acceleration}.

We simply write, as a consequence of the comparison principle, the following:
\begin{equation*}
\begin{array}{ll}
\displaystyle \liminf_{T\to\infty} \; \frac{\max\left\{x : \exists \theta \in \Theta, u(T,x,\theta) \geq m\right\}}{T^{3/2}}
		&\geq \displaystyle \liminf_{T\to\infty} \; \frac{\max\left\{x : \exists \theta \in \Theta, w^+(T,x,\theta) \geq m\right\}}{T^{3/2}} \medskip \\
		&\geq  \displaystyle \liminf_{T\to\infty} \; \frac{X_T^{(2)}(T)}{T^{3/2}}\medskip\\
		&=  \displaystyle \liminf_{T\to\infty} \; \left(\frac{ \frac{c}{2} T^2}{T^{3/2}\left(\sqrt{\frac{c T}{2} + \left( 1 + \frac{3\lambda}{4} \right) \underline\theta}\right)} - \frac{\sqrt{\left(1+\frac{3\lambda}{4}\right) \underline \theta}}{ T^{3/2}}\right) \\
		&=  \displaystyle \sqrt\frac{c}{2}.
\end{array}
\end{equation*}
After calling the latter constant $\gamma$, this finishes the proof.

\begin{rem}
To find optimal constants, one could write a general framework for the trajectories such as for the cane toads equation in \cite{BHR_acceleration} and try to optimize afterwards. However, in this paper, the suboptimal trajectories are not the only reason for sub-optimality.  One would have to quantify all the constants in various lemmas in this paper, that are not strongly related to the definition of the trajectories themselves.  Since this is not our main interest, we opt for the simplest possible presentation.
\end{rem}
%
%

\section{Building the sub-solution $w$ along trajectories}\label{sec:lemma}

In this section, we will prove \cref{prop:enough_mass}, by working on the two components of the trajectories separately. 
%
Since we want to build a sub-solution along a trajectory $(X_T,\Theta_T)$, we work in the moving coordinates introduced in~\cite{BHR_acceleration})
\begin{equation*}
y = \frac{x - X_T}{\sqrt \Theta_T}, \qquad \eta = \theta - \Theta_T.
\end{equation*}
The moving ellipse and annulus $\mathcal{E}_{t,\Lambda}$ and $\mathcal{A}_{t,\Lambda}$ are respectively transformed into the following stationary circle and annulus $\mathcal{E}_{\Lambda}$ and $\mathcal{A}_{\Lambda}$ :
\begin{equation*}
\mathcal{E}_{\Lambda} = \left\{ (y,\eta) : y^2 + \eta^2 \leq \Lambda^2 \right\}, \qquad \mathcal{A}_{\Lambda} = \left\{ (y,\eta) : \Lambda^2 \leq y^2 + \eta^2 \leq 4 \Lambda^2 \right\}. 
\end{equation*}
The Dirichlet boundary condition that we enforce on $\partial \mathcal{E}_{t,\Lambda}$ pushes us to make the following change of unknown functions: 

\begin{equation*}
w^+(t, x,\theta) = \alpha + v^+\left( t, y , \eta \right), \quad w^-(t, x, \theta) = \alpha - v^- \left( t, y , \eta \right).
\end{equation*}

\noindent We see directly that $v^+$ and $v^-$ solve respectively, for $t \in [0,T]$, 
\begin{equation}\label{eq:v+}
\begin{cases}
v^+_t - c_1(y,t) v^+_y -  c_2(t) v^+_\eta = d(\eta,t) v^+_{yy} + v^+_{\eta\eta} + f_r(\alpha + v^+), \qquad & (y,\eta) \in \mathcal{E}_{\Lambda} , \medskip\\
v^+(t,y,\eta) = 0,&(y,\eta) \in \partial \mathcal{E}_{\Lambda} .
\end{cases}
\end{equation}
\begin{equation}\label{eq:v-}
\begin{cases}
	v^-_t - c_1(y,t) v^-_y - c_2(t) v^-_\eta = d(\eta,t) v^-_{yy} +v^-_{\eta\eta} - f_r(\alpha - v^-),\qquad & (y,\eta) \in \mathcal{A}_{\Lambda} , \medskip\\
	v^-(t,y,\eta) = 0, &(y,\eta) \in \partial \mathcal{E}_{\Lambda} ,\medskip\\
	v^-(t,y,\eta) = \alpha, &(y,\eta) \in \partial \mathcal{A}_{\Lambda}  \backslash \partial \mathcal{E}_{\Lambda} .
\end{cases}
\end{equation}
where we have defined
\begin{equation*}
c_{1}(y,t) := \frac{\dot X_T}{\sqrt \Theta_T} + \frac{y \dot \Theta_T}{2 \Theta_T}, \qquad c_2(t) := \dot\Theta_T, \qquad d(\eta,t) := 1 + \frac{\eta}{\Theta_T}.
\end{equation*}

The maximum principle assures us that $0 \leq v^-\leq \alpha$ holds everywhere in time and space as long as the initial conditions of $v^-$ satisfy the same bound. Similarly, The maximum principle assures us that $0 \leq v^+\leq r- \alpha$ holds everywhere in time and space as long as the initial conditions of $v^+$ satisfy the same bound.

We now need to fix the initial conditions. It is easy to see that the non-constant coefficients in the equation vanish as $\lambda$ tends to infinity.  Hence, it behooves us to look at the steady state solutions to~\eqref{eq:v+} and~\eqref{eq:v-} where the dependence on $\lambda$ disappears.

We define for $i=1,2$, corresponding to each part of the trajectory,
\begin{equation*}
c_{(i)}^{\infty} := \left(  c_{1,(i)}^{\infty}  , c_{2,(i)}^{\infty} \right).
\end{equation*}
For clarity, we emphasize that $c_{(1)}^\infty$ corresponds to the limit of the functions $c_1$ and $c_2$ on the interval $[0,T/2]$ and $c_{(2)}^\infty$ corresponds to the limit of the functions $c_1$ and $c_2$ on the interval $[T/2,T]$.
 
For any $\Lambda \in \R^+$, we thus look for solutions $\phi_{(i)}^{\pm}$ of
\begin{equation}\label{eq:phi-}
\begin{cases}
- \Delta \phi_{(i)}^{-} - c_{(i)}^{\infty} \cdot \nabla \phi_{(i)}^{-} = - f_{r_{(i)}}\left(\alpha - \phi_{(i)}^{-}\right), \qquad &(y,\eta) \in \mathcal{A}_{\Lambda} ,\medskip\\
\phi_{(i)}^{-} (y,\eta) = 0,&(y,\eta) \in \partial \mathcal{E}_{\Lambda} ,\medskip\\
\phi_{(i)}^{-} (y,\eta) = \alpha, &(y,\eta) \in \partial \mathcal{A}_{\Lambda}  \backslash \partial \mathcal{E}_{\Lambda} .
\end{cases}
\end{equation}
and
\begin{equation}\label{eq:phi+}
\begin{cases}
- \Delta \phi_{(i)}^{+} - c_{(i)}^{\infty} \cdot \nabla \phi_{(i)}^{+} = f_{r_{(i)}}\left(\alpha + \phi_{(i)}^{+}\right), \qquad &(y,\eta) \in \mathcal{E}_{\Lambda},\medskip\\
\phi_{(i)}^{+}(y,\eta) = 0,&(y,\eta) \in \partial \mathcal{E}_{\Lambda},
\end{cases}
\end{equation}
where we choose $r_{(1)} = 1$ and we leave $r_{(2)} \in(\max\{m,2\alpha\},1)$ to be determined.  We now state the lemma regarding these steady solutions that we require.

\begin{lem}\label{lem:steady_states}
There exists positive constants $c_0$ and $\Lambda_0$ such that if $\|c^\infty_{(i)}\| \leq c_0$ and $\Lambda > \Lambda_0$, then there exist non-trivial solutions $\phi_{(i)}^{-}$ and $\phi_{(i)}^{+}$ to~\eqref{eq:phi-} and~\eqref{eq:phi+}, respectively.  Moreover, if $\phi_{(i)}^{+}$ is any steady solution to~\eqref{eq:phi+}, then for any $\epsilon > 0$, there exist $\Lambda_\epsilon$ and $L_\epsilon\leq \Lambda_\epsilon/2$, both depending only on $\epsilon$ such that if $\Lambda > \Lambda_\epsilon$ then
\begin{equation}\label{eq:bulk_lower_bound}
	\phi_{(i)}^+(y,\eta) \geq r_{(i)} - \alpha - \epsilon
\end{equation}
	whenever $\dist((y,\eta), \partial \cE_\Lambda) > L_\epsilon$.
\end{lem}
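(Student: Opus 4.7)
The plan is to handle the three claims in order: existence of $\phi^-_{(i)}$ and $\phi^+_{(i)}$ by the sub- and super-solution method, and the bulk lower bound by a comparison with a ball-shaped auxiliary problem combined with a Liouville-type argument on $\R^2$.

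For $\phi^-_{(i)}$, the reaction $v \mapsto -f_{r_{(i)}}(\alpha - v) = v(\alpha - v)(1 - \alpha + v)$ is non-negative and Lipschitz on $[0, \alpha]$; the constants $0$ and $\alpha$ are respectively a sub- and a super-solution of \eqref{eq:phi-}, so monotone iteration (after shifting by $Kv$ to restore monotonicity of the right-hand side) produces a solution, non-trivial by the prescribed boundary values. For $\phi^+_{(i)}$, let $\mu_\Lambda$ and $\psi_\Lambda$ denote the principal Dirichlet eigenpair of $-\Delta - c^\infty_{(i)} \cdot \nabla$ on $\cE_\Lambda$. The similarity transform $u = e^{-c^\infty_{(i)} \cdot x/2} v$ identifies $\mu_\Lambda$ with the Dirichlet eigenvalue of $-\Delta + |c^\infty_{(i)}|^2/4$, i.e.\ $|c^\infty_{(i)}|^2/4 + O(\Lambda^{-2})$. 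Choosing $c_0$ small enough that $c_0^2/4 < \alpha(1-\alpha) = f'_{r_{(i)}}(\alpha^+)$ and $\Lambda_0$ large then forces $\mu_\Lambda < \alpha(1-\alpha)$; thus $A\psi_\Lambda$ is a strict sub-solution of \eqref{eq:phi+} for small $A > 0$, while $r_{(i)} - \alpha$ is a super-solution, and monotone iteration furnishes $\phi^+_{(i)}$.

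For the bulk bound, fix $\epsilon > 0$ and apply the same construction on a ball $B_L \subset \R^2$ (with the translation-invariant coefficients $c^\infty_{(i)}$) to obtain, for $L$ large, a minimal positive steady state $u_L$ of $-\Delta u - c^\infty_{(i)} \cdot \nabla u = f_{r_{(i)}}(\alpha + u)$ with zero Dirichlet data. Running the monotone iterations for $u_{L_1}$ and $u_{L_2}$ with $L_1 < L_2$ from a common small seed and comparing iterates on $B_{L_1}$ (using that $u_{L_2} \geq 0$ on $\partial B_{L_1}$) shows $u_{L_2}|_{B_{L_1}} \geq u_{L_1}$, so the family $\{u_L\}$ is non-decreasing in $L$ on every fixed ball; elliptic regularity then promotes its pointwise limit to a $C^2_{\mathrm{loc}}$ limit $u_\infty$, a bounded positive solution on all of $\R^2$ with $u_\infty(0) > 0$. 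To force $u_\infty \equiv r_{(i)} - \alpha$ we invoke the Aronson--Weinberger hair-trigger principle: provided $c_0 < 2\sqrt{\alpha(1-\alpha)}$, so that the drift speed falls below the KPP invasion speed of the linearization at $0$, the parabolic flow on $\R^2$ with the same coefficients, started from a compactly supported non-trivial datum below $u_\infty$, converges locally uniformly to $r_{(i)} - \alpha$; parabolic comparison against the stationary $u_\infty$ then forces $u_\infty \geq r_{(i)} - \alpha$, hence equality.

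Consequently $u_L(0) \to r_{(i)} - \alpha$; we pick $L_\epsilon$ with $u_{L_\epsilon}(0) \geq r_{(i)} - \alpha - \epsilon$ and set $\Lambda_\epsilon = 2 L_\epsilon$. For any $(y_0, \eta_0) \in \cE_\Lambda$ with $\dist((y_0, \eta_0), \partial\cE_\Lambda) > L_\epsilon$, the ball $B_{L_\epsilon}(y_0, \eta_0)$ lies inside $\cE_\Lambda$, and any steady solution $\phi^+_{(i)}$ of \eqref{eq:phi+} is bounded below by a positive constant on its closure. Running the monotone iteration inside this translated ball with zero Dirichlet data and comparing each linear step against $\phi^+_{(i)}$ (whose boundary values are strictly positive) keeps every iterate below $\phi^+_{(i)}$, so in the limit the translated minimal steady state satisfies $u_{L_\epsilon}(\cdot - (y_0, \eta_0)) \leq \phi^+_{(i)}$ throughout the ball, giving $\phi^+_{(i)}(y_0, \eta_0) \geq u_{L_\epsilon}(0) \geq r_{(i)} - \alpha - \epsilon$. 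The main obstacle is precisely the Liouville/hair-trigger step, since $w \mapsto f_{r_{(i)}}(\alpha + w)/w = c_{r_{(i)}}(\alpha + w)(r_{(i)} - \alpha - w)$ is first increasing then decreasing on $(0, r_{(i)} - \alpha)$ rather than strictly decreasing, so that the classical KPP Liouville theorem does not apply verbatim; one must invoke instead the hair-trigger principle for pushed monostable reactions in two space dimensions.
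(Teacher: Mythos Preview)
Your existence arguments coincide with the paper's. For the bulk lower bound \eqref{eq:bulk_lower_bound} you take a genuinely different route. The paper first plants an explicit cosine-bump sub-solution to obtain a uniform preliminary bound $\phi^+\geq\gamma$ at all points a fixed distance from $\partial\cE_\Lambda$, valid for \emph{every} positive solution; it then argues by contradiction, translating a would-be bad sequence and passing to an entire $C^2_{\mathrm{loc}}$ limit $\phi^+_\infty$ on $\R^2$ with $\gamma\leq\phi^+_\infty\leq r-\alpha$ but $\phi^+_\infty(0)\leq r-\alpha-\epsilon$, re-centring at the infimum so that $(0,0)$ becomes a global minimum, and observing that at an interior minimum the equation forces $f_r(\alpha+\phi^+_\infty)\leq 0$, hence $\phi^+_\infty(0)\in\{0,r-\alpha\}$. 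This uses nothing beyond the strong maximum principle. Your approach instead builds a monotone family $u_L$ on growing balls, passes to an entire limit, identifies it as $r-\alpha$ via Aronson--Weinberger spreading (rightly noting that the drift forces the condition $|c^\infty|<c^*$ rather than mere hair-trigger), and then compares a translate of $u_{L_\epsilon}$ against an arbitrary $\phi^+$. The paper's route is more self-contained; yours trades the ad hoc interior-minimum argument for a citable spreading black box.

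There is one soft spot in your final comparison. Keeping every iterate below $\phi^+$ requires the seed to lie below $\phi^+$ on the translated ball, yet $u_{L_\epsilon}$ and its seed were fixed \emph{before} $\phi^+$ was given, and the strong-maximum-principle lower bound you invoke for $\phi^+$ depends on $\phi^+$ itself. You have not argued that iteration from a possibly smaller seed still returns the same $u_{L_\epsilon}$; this is precisely the non-KPP obstruction you flag at the end ($g(u)/u$ is not monotone, so the usual minimal-solution uniqueness is unavailable). The cleanest repair is exactly the paper's first step---a preliminary bound $\phi^+\geq\gamma$ away from the boundary, with $\gamma$ depending only on $\alpha,r$---after which your seed can be fixed once and for all below $\gamma$, and the comparison goes through uniformly.
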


We postpone the proof of this lemma to \cref{sec:steady}. We now complete the definition of the function $w$ by setting the initial conditions
\begin{equation*}
v_{(1)}^-(0,\cdot) = \phi_{(1)}^-, \qquad v_{(1)}^+(0,\cdot) = \phi_{(1)}^+,
\end{equation*}
and
\begin{equation*}
v_{(2)}^-(T/2,\cdot) = \phi_{(2)}^-, \qquad v_{(2)}^+(0,\cdot) = \phi_{(2)}^+,
\end{equation*}

With $v^\pm_{(i)}$ defined, we may now state a proposition guaranteeing that $w$ will satisfy the condition~\eqref{eq:good_convexity} ensuring that it is a sub-solution.  This proposition will be proved at the end of this section.
\begin{prop}\label{prop:subsol}
There exists positive constants $\Lambda_0$, $T_0$, and $c_0$, 
such that if $\Lambda \geq \Lambda_0$, $T\geq T_0$, and $c \leq c_0$, then $v^+$ and $v^-$, defined above, satisfy
	\begin{equation}\label{eq:normal_derivative_ordering}
		\vert \partial_n v^+_{(i)} \vert \geq \vert \partial_n v^-_{(i)} \vert
	\end{equation}
	on $\mathcal{E}_{\Lambda}$.
\end{prop}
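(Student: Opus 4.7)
The plan is to deduce the normal-derivative ordering \eqref{eq:normal_derivative_ordering} from a strict analogue for the steady profiles $\phi^{\pm}_{(i)}$, and then to transfer that inequality to the time-dependent $v^{\pm}_{(i)}$ by a parabolic continuous-dependence argument. First I would reduce to the steady problem: since the coefficients $c_1, c_2, d$ in \eqref{eq:v+}--\eqref{eq:v-} converge (uniformly in $t$ on the relevant interval, for $(y,\eta)\in\mathcal{E}_\Lambda\cup\mathcal{A}_\Lambda$) to $c^{\infty}_{(i)}$ and $1$ as $T,\lambda\to\infty$ and $c\to 0$, and since $v^{\pm}_{(i)}$ is initialized at $\phi^{\pm}_{(i)}$, standard parabolic Schauder estimates up to the boundary give $\|v^{\pm}_{(i)}(t,\cdot)-\phi^{\pm}_{(i)}\|_{C^1(\overline{\mathcal{E}_\Lambda})}$ uniformly small in $t$. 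It therefore suffices to establish a uniform strict gap
\begin{equation*}
|\partial_n \phi^+_{(i)}| \geq |\partial_n \phi^-_{(i)}| + \delta \quad \text{on } \partial \mathcal{E}_\Lambda,
\end{equation*}
for some $\delta>0$ independent of $\Lambda$ (large) and $\|c^\infty_{(i)}\|$ (small).

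To obtain this gap I would analyze \eqref{eq:phi+}--\eqref{eq:phi-} via a boundary-layer argument. Blowing up near an arbitrary point $p\in\partial\mathcal{E}_\Lambda$, the curvature of $\partial\mathcal{E}_\Lambda$ and the drift $c^\infty_{(i)}\cdot\nabla$ become lower order, of size $O(1/\Lambda)$ and $O(\|c^\infty_{(i)}\|)$ respectively. Setting $u=\alpha+\phi^+_{(i)}$ and $v=\alpha-\phi^-_{(i)}$, the blow-up limits solve
\begin{equation*}
u_{ss}+f_{r_{(i)}}(u)=0 \text{ on } (-\infty,0), \quad u(0)=\alpha, \quad u(-\infty)=r_{(i)},
\end{equation*}
\begin{equation*}
v_{ss}+f_{r_{(i)}}(v)=0 \text{ on } (0,+\infty), \quad v(0)=\alpha, \quad v(+\infty)=0,
\end{equation*}
where the $u$-asymptote at $-\infty$ comes from the bulk estimate \eqref{eq:bulk_lower_bound} (combined with the sign-structure of $f_{r_{(i)}}$ which forces the equilibrium $r_{(i)}$), while the $v$-asymptote at $+\infty$ follows from the outer Dirichlet datum on $\partial\mathcal{A}_\Lambda$ together with the fact that $0$ is the only $f_{r_{(i)}}$-equilibrium below $\alpha$.

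The explicit first integral then carries the argument: multiplying each ODE by its derivative and integrating yields
\begin{equation*}
|u_s(0)|^2=2\bigl(F_{r_{(i)}}(r_{(i)})-F_{r_{(i)}}(\alpha)\bigr), \qquad |v_s(0)|^2=-2F_{r_{(i)}}(\alpha),
\end{equation*}
where $F_r$ is the antiderivative of $f_r$ vanishing at $0$. Since $|\partial_n\phi^{\pm}_{(i)}|$ equals $|u_s(0)|$ (resp.\ $|v_s(0)|$) in the blow-up limit, the desired strict inequality reduces to $F_{r_{(i)}}(r_{(i)})>0$. A direct computation gives $F_1(1)=(1-2\alpha)/12>0$, which immediately handles $r_{(1)}=1$ and, by continuity of $r\mapsto F_r(r)$, also any $r_{(2)}$ chosen sufficiently close to $1$; this is exactly what pins down the choice of $r_{(2)}\in(\max\{m,2\alpha\},1)$, provided $m$ is bounded away from $1$. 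Quantifying the $O(1/\Lambda)+O(\|c^\infty\|)$ perturbations then produces a uniform gap $\delta>0$ for $\Lambda\geq\Lambda_0$ and $c\leq c_0$.

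The main obstacle is the rigorous justification of the boundary-layer step: one must upgrade the formal blow-up above to a genuine $C^1$ approximation of $\phi^{\pm}_{(i)}$ by the one-dimensional profiles in a neighborhood of $\partial\mathcal{E}_\Lambda$, uniformly in the base point $p$, for the full non-radial, curved, drifted problem. This is precisely the content postponed to \cref{sec:steady,sec:convergence_steady}, and it is what ultimately controls how large $\Lambda_0$ and how small $c_0$ must be taken.
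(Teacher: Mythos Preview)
Your overall architecture matches the paper's: reduce \eqref{eq:normal_derivative_ordering} to a strict gap for the steady profiles $\phi^\pm_{(i)}$ via the $C^1$ convergence of \cref{lem:convergence_steady}, then analyze the steady problem by blowing up at a boundary point of $\partial\mathcal{E}_\Lambda$ and sending $\Lambda\to\infty$, $\|c^\infty\|\to 0$ so that the limit is a one-dimensional heteroclinic problem on a half-line. The paper packages the first step as \cref{lem:convergence_steady} and the second as \cref{lem:der_steady_states}, whose proof (via \cref{lem:centered_normal_derivative}) is exactly this blow-up to the half-plane followed by a further limit $c^\infty\to 0$.

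Where you diverge is in how the limiting inequality $\partial_n z^+(0,0) > \partial_n z^-(0,0)$ is obtained. The paper argues qualitatively: it invokes the uniqueness result \cref{lem:uniqueness} to identify $z^\pm_\infty$ as one-dimensional, checks the pointwise inequality $f_r(\alpha+u)\le -f_r(\alpha-u)$, uses a sliding argument to conclude $z^+_\infty>z^-_\infty$, and then applies the Hopf lemma at the origin. You instead compute the first integral directly, obtaining $|u_s(0)|^2-|v_s(0)|^2 = 2F_{r}(r)$ and reducing the strict gap to the explicit sign condition $F_{r}(r)>0$, verified as $F_1(1)=(1-2\alpha)/12>0$. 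Your route is more quantitative and makes transparent both why the hypothesis $\alpha<\tfrac12$ enters and why $r_{(2)}$ must be taken close enough to $1$; the paper's route is softer but avoids any computation. One point to flag: your claim that the blow-up limits ``solve'' the one-dimensional ODEs presupposes that the half-plane limits are one-dimensional, which is not automatic and is exactly what the paper secures via the uniqueness lemma \cref{lem:uniqueness}; you should make that dependence explicit rather than folding it into the boundary-layer step.
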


With the initial conditions we have fixed for $v^\pm$, it is clear that as $\Lambda$ tends to infinity, $v^\pm$ should tend to $\phi^\pm$ uniformly in the $C^1$ topology. The following lemma ensures that this convergence holds and is uniform in $T$.
\begin{lem}\label{lem:convergence_steady}
	Fix $\epsilon > 0$.  There exists $\Lambda_\epsilon$ and $\lambda_{\epsilon}$ such that if $\Lambda \geq \Lambda_\epsilon$ and $\lambda \geq \lambda_{\epsilon}$ then
	\[
		\|v_{(i)}^\pm(t,\cdot) - \phi_{(i)}^\pm(\cdot)\|_{C^1} \leq \epsilon.
	\]
	Moreover, this convergence holds uniformly in $t$.
\end{lem}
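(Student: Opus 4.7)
The plan is to write the error $z^\pm_{(i)} := v^\pm_{(i)} - \phi^\pm_{(i)}$ and estimate it directly. By the choice of initial data for $v^\pm_{(i)}$, and because $\phi^\pm_{(i)}$ satisfies the same Dirichlet data on $\partial \cE_\Lambda \cup \partial \cA_\Lambda$ as $v^\pm_{(i)}$, the error $z^\pm_{(i)}$ vanishes identically at the initial time (either $0$ or $T/2$) and on every Dirichlet portion of the boundary. Subtracting the system~\eqref{eq:phi-}--\eqref{eq:phi+} from the system~\eqref{eq:v+}--\eqref{eq:v-} shows that $z^\pm_{(i)}$ satisfies a linear parabolic equation of the form
\begin{equation*}
z^\pm_t - c_1 z^\pm_y - c_2 z^\pm_\eta - d z^\pm_{yy} - z^\pm_{\eta\eta} = a^\pm(t,y,\eta)\, z^\pm + G^\pm(t,y,\eta),
\end{equation*}
where $\|a^\pm\|_\infty$ is controlled by the Lipschitz constant of $f_{r_{(i)}}$ (it arises from linearizing the reaction difference) and the forcing
\begin{equation*}
G^\pm = (d-1)\phi^{\pm}_{yy} + (c_1 - c_{1,(i)}^\infty)\phi^\pm_y + (c_2 - c_{2,(i)}^\infty)\phi^\pm_\eta
\end{equation*}
records the discrepancy between the time-dependent coefficients of the parabolic problem and their constant counterparts in the steady problem.

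First I would bound $G^\pm$ in $L^\infty$. Using the explicit expressions of the trajectories together with $\Theta_T(t) \geq (1+3\lambda/4)\underline\theta$ and $|y|,|\eta| \leq 2\Lambda$ on $\cE_\Lambda \cup \cA_\Lambda$, one checks that on each piece $i=1,2$ the quantities $|d-1|$, $|c_1 - c_{1,(i)}^\infty|$ and $|c_2 - c_{2,(i)}^\infty|$ are each of order $C\Lambda/\lambda$, uniformly in $t$; on the second piece $c_1$ and $c_2$ are already constants equal to their limits, so only $(d-1)$ is nonzero. Combined with the $C^2$ bound on $\phi^\pm_{(i)}$ coming from standard elliptic regularity on the fixed bounded domain (uniform in $T$ thanks to the uniform bound $\|c^\infty_{(i)}\|\leq c_0$ from \cref{lem:steady_states}), this gives $\|G^\pm(t,\cdot)\|_\infty \leq C(\Lambda)/\lambda$, uniformly in $t$.

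The main step, and the principal difficulty, is to convert this smallness of $G^\pm$ into an $L^\infty$ bound on $z^\pm_{(i)}$ that is uniform in $t$ and hence in $T$. A naive Duhamel/Gronwall estimate would contribute a factor $e^{LT}$ through the Lipschitz zeroth-order term $a^\pm$ and be useless as $T \to \infty$. To remove this exponential growth one exploits the dynamical stability of $\phi^\pm_{(i)}$ as steady states of the autonomous limiting problem. For $\phi^+_{(i)}$, \cref{lem:steady_states} guarantees that for $\Lambda$ large enough, $\phi^+_{(i)}$ lies within $\epsilon$ of the stable zero $r_{(i)}$ of $f_{r_{(i)}}$ outside a thin neighborhood of $\partial \cE_\Lambda$; since $f_{r_{(i)}}'(r_{(i)}) < 0$, a standard test-function/eigenfunction argument then shows that the linearization of~\eqref{eq:phi+} around $\phi^+_{(i)}$ has a strictly positive principal eigenvalue, so its Dirichlet semigroup on $\cE_\Lambda$ is exponentially contracting. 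A Duhamel bound for $z^+_{(i)}$ against this semigroup, together with a continuity-in-time bootstrap that keeps $\|z^+_{(i)}(t,\cdot)\|_\infty$ small enough that the linearization dominates the quadratic nonlinear remainder, yields $\|z^+_{(i)}(t,\cdot)\|_\infty \leq C'(\Lambda)/\lambda$ uniformly in $t$. The same conclusion for $z^-_{(i)}$ follows from a monotone-iteration argument on $\cA_\Lambda$: using the well-ordered Dirichlet data of $\phi^-_{(i)}$ one squeezes $v^-_{(i)}$ between ordered sub- and super-solutions that themselves converge to $\phi^-_{(i)}$ as $\lambda \to \infty$.

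Finally, once $\|z^\pm_{(i)}\|_\infty$ is controlled, interior and boundary parabolic Schauder estimates on subcylinders of $\cE_\Lambda \cup \cA_\Lambda$ (whose boundary is smooth, with $z^\pm_{(i)}$ vanishing on the Dirichlet portion) upgrade the $L^\infty$ bound to $\|z^\pm_{(i)}(t,\cdot)\|_{C^1} \leq C''(\Lambda)/\lambda$, uniformly in $t$. Setting $\lambda_\epsilon := C''(\Lambda_\epsilon)/\epsilon$ for a suitable $\Lambda_\epsilon$ then yields the claim. The step I expect to require the most care is the uniform-in-$t$ estimate of the previous paragraph; without the dynamical stability of $\phi^\pm_{(i)}$ (which is why \cref{lem:steady_states} plays an essential role here) the argument degenerates as $T$ grows.
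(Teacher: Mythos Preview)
Your approach is genuinely different from the paper's. The paper argues by contradiction and compactness: assuming sequences $\lambda_k, T_k \to \infty$ along which $\|v^\pm - \phi^\pm\|_\infty \geq \epsilon$, it extracts limits via parabolic regularity, uses the uniform positivity lemma (\cref{lem:uniformly_positive}) to rule out degeneration to zero, traps the limit between two steady states obtained from monotone parabolic flows, and then invokes the asymptotic uniqueness lemma (\cref{lem:close_steady}) to force the limit within $\epsilon/2$ of $\phi^\pm$, a contradiction. The upgrade from $L^\infty$ to $C^1$ is, as in your sketch, pure parabolic regularity. The paper never linearizes, never computes a principal eigenvalue, and never writes down the forcing $G^\pm$; the role your spectral gap plays is instead absorbed into \cref{lem:close_steady} and \cref{lem:uniformly_positive}.

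Your route is more quantitative and would yield an explicit $C(\Lambda)/\lambda$ rate, which the paper's soft argument does not. The price is the linearized-stability step: the claim that the Dirichlet linearization of \eqref{eq:phi+} at $\phi^+_{(i)}$ has a strictly positive principal eigenvalue for $\Lambda$ large is believable (the potential $f_r'(\alpha+\phi^+_{(i)})$ is uniformly negative on $\cE_{\Lambda - L_\epsilon}$ by \cref{lem:steady_states} and positive only on a boundary layer of fixed width), but it does require an honest eigenvalue comparison, and you should say how; the paper avoids this entirely. Your treatment of $v^-_{(i)}$ is thinner still: the phrase ``monotone-iteration argument on $\cA_\Lambda$'' does not by itself give a uniform-in-$t$ squeeze, since the time-dependent coefficients prevent the obvious ordered barriers from being stationary, and the reaction $-f_r(\alpha - \cdot)$ has the unstable sign near $v^-=0$. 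If you pursue your route, that is where the work lies; alternatively, one can simply borrow the paper's compactness argument for $v^-$ and keep your quantitative argument for $v^+$.
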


We postpone the proof of this Lemma until the dedicated \cref{sec:convergence_steady}. We now show how the combination of \cref{lem:steady_states} and \cref{lem:convergence_steady} finally implies \cref{prop:enough_mass}. After changing variables it is equivalent to proving the same result for $v^+$. This is exactly now that we are going to travel along the trajectory $\left( X_T ,\Theta_T\right)$ and play with parameters.

%
%
We start with the first part of the trajectory.  Fix a positive constant $\epsilon < (1 - m)/3$.  The discussion above implies that we may find $\Lambda_0,T_0$ and $\delta_0$ such that if $\Lambda \geq \Lambda_0$, $T \geq T_0$, and $c \leq c_0$, then we have that $v^+$ gives a subsolution to $u$ such that 
\begin{equation}\label{eq:first_lower_bound}
u\left(\frac{T}{2}, \cdot\right) \geq \alpha + v^+ \left( \frac{T}{2} , \cdot  \right) \geq \phi_{(1)}^+ \left( \cdot \right) - \eps + \alpha \geq 1 - 2 \eps
\end{equation}
on the set $\mathcal{E}_{\Lambda/2}$. This concludes our sliding over the first part of the trajectory.

First, by increasing $\Lambda_0$ if necessary, we may find $r_{(2)} := r \geq \epsilon + m$ in the interval $(2\alpha,  1 - 2 \eps  )$. By the maximum principle any solution to~\eqref{eq:w+} will be less than or equal to $r$. 

With the choice $\Lambda' = \Lambda/2$, we find that, by increasing $T_0$ and $\Lambda_0$, if necessary, if $T \geq T_0$ and $\Lambda' \geq \Lambda_0$, then $w$ is a sub-solution to $u$ such that  
\begin{equation*}
	u(T,\cdot)
		\geq w_{(2)}^+(T , \cdot)
		= v_{(2)}^+(T,\cdot) +\alpha
		\geq \phi_{(2)}^+ \left(\cdot \right) - \eps + \alpha
		\geq m.
\end{equation*}
on the set $\mathcal{E}_{\frac{\Lambda}{2}}$.  We point out that~\eqref{eq:first_lower_bound} is crucial to guarantee that $w$ is a sub-solution to $u$ on the interval $[T/2, T]$.

We finish this section by reducing \cref{prop:subsol} to the following lemma.
\begin{lem}\label{lem:der_steady_states}
For $\Lambda$ sufficiently large,
\begin{equation}\label{eq:phi_normal}
\vert \partial_n \phi_{(i)}^+( \Lambda \hat v) \vert > \vert \partial_n \phi_{(i)}^-( \Lambda \hat{v}) \vert
\end{equation}
holds uniformly in $\hat{v}  \in \mathbb{S}^1(\R^2)$.
\end{lem}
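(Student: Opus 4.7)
The plan is to reduce the boundary comparison at the point $\Lambda\hat v\in\partial\cE_\Lambda$ to a one-dimensional computation on a half-line, exploiting two facts: the curvature of $\partial\cE_\Lambda$ is $O(1/\Lambda)$ and thus vanishes as $\Lambda\to\infty$, and the drift $c^\infty_{(i)}$ has been taken arbitrarily small in \cref{lem:steady_states}. A blow-up (translation) at any boundary point should therefore yield a translation-invariant bistable equation on a flat half-plane, and the normal derivatives of $\phi^\pm_{(i)}$ should converge to those of the corresponding one-dimensional profiles.

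The 1D computation is classical. On $\R^+$, the profile $\Phi^+$ solving $-\Phi^+{}''=f_r(\alpha+\Phi^+)$ with $\Phi^+(0)=0$ and $\Phi^+(+\infty)=r-\alpha$ is unique up to translation, and multiplying by $\Phi^+{}'$ and integrating gives
\[
|\Phi^+{}'(0)|=\sqrt{\,2\textstyle\int_\alpha^r f_r(v)\,dv\,}.
\]
Analogously, $\Phi^-$ solving $-\Phi^-{}''=-f_r(\alpha-\Phi^-)$ on $\R^+$ with $\Phi^-(0)=0$, $\Phi^-(+\infty)=\alpha$ satisfies
\[
|\Phi^-{}'(0)|=\sqrt{2|F_r(\alpha)|}, \qquad F_r(v):=\int_0^v f_r(s)\,ds.
\]
Thus the desired strict inequality $|\Phi^+{}'(0)|>|\Phi^-{}'(0)|$ is equivalent to $F_r(r)>0$, which after explicit integration of the cubic $f_r$ reduces to
\[
(1-\alpha)(r-\alpha)^2(r+\alpha)>\alpha^3(2-\alpha).
\]
A short monotonicity check verifies this at $r=1$ for every $\alpha\in(0,\tfrac12)$, and by continuity it persists for $r$ in a left neighborhood of $1$, which is precisely the range allowed for $r_{(i)}$.

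For the 2D-to-1D passage I would argue by contradiction. If the lemma failed, there would exist sequences $\Lambda_n\to\infty$ and $\hat v_n\in\mathbb{S}^1$ with $|\partial_n\phi^+_{(i)}(\Lambda_n\hat v_n)|\leq|\partial_n\phi^-_{(i)}(\Lambda_n\hat v_n)|$. Translate to local inward-normal and tangential coordinates $(s,\tau)$ centered at $\Lambda_n\hat v_n$; in these variables $\partial\cE_{\Lambda_n}$ is locally a graph over $\{s=0\}$ whose $C^2$-norm is $O(1/\Lambda_n)\to 0$. The uniform $L^\infty$ bound $\phi_{(i)}^+\in[0,r-\alpha]$, $\phi_{(i)}^-\in[0,\alpha]$ together with interior elliptic regularity and boundary Schauder estimates give uniform $C^{1,\beta}_{\mathrm{loc}}$ control of the translated functions $\tilde\phi^\pm_n$; a diagonal extraction yields $C^1_{\mathrm{loc}}$-limits $\tilde\Phi^\pm$ solving the bistable equation (with a possibly nonzero constant drift of size $O(c_0)$) on a half-plane $\{s\lessgtr 0\}$ with Dirichlet data $0$ on $\{s=0\}$. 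The bulk estimate~\eqref{eq:bulk_lower_bound} survives this translation and forces $\tilde\Phi^+(s,\tau)\ge r-\alpha-\epsilon$ for $s$ sufficiently negative, while for $\tilde\Phi^-$ the outer Dirichlet data on $\partial\cA_{\Lambda_n}\setminus\partial\cE_{\Lambda_n}$, which sits at distance $\Lambda_n\to\infty$ from the blow-up point, combined with a planar comparison to $\Phi^-$, force $\tilde\Phi^-(s,\tau)\to\alpha$ as $s\to+\infty$. A sliding argument in the tangential variable $\tau$ (the equation and boundary data are $\tau$-invariant in the limit) shows $\tilde\Phi^\pm$ depend only on $s$, and phase-plane uniqueness of the associated heteroclinics identifies them with the 1D profiles of the previous paragraph, up to a small-drift correction. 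The $C^1$ convergence then transports the strict 1D inequality to the prelimit, contradicting the assumption.

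The main obstacle is the identification of the blow-up limits. Without bulk controls one could a priori converge to degenerate solutions such as $\tilde\Phi^\pm\equiv 0$; for $\tilde\Phi^+$ this is ruled out directly by~\eqref{eq:bulk_lower_bound}, but for $\tilde\Phi^-$ it requires an analogue: namely, that $\phi^-$ is close to $\alpha$ away from $\partial\cE_\Lambda$. This is natural given the outer Dirichlet condition but must nevertheless be established, for instance by constructing a planar sub-solution built from a shifted $\Phi^-$ and comparing it to $\phi^-$ near $\partial\cA_\Lambda\setminus\partial\cE_\Lambda$. A secondary, routine, point is the passage of the small drift through the limit: because the 1D inequality $F_r(r)>0$ is strict, it persists under sufficiently small perturbations of the coefficients, so that the final step of the contradiction is unaffected provided $c_0$ is taken small enough.
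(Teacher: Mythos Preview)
Your proposal follows the same overall architecture as the paper: argue by contradiction, blow up at a sequence of boundary points $\Lambda_n\hat v_n$, use elliptic regularity to pass to half-plane limits with Dirichlet data on $\partial P$, reduce to one-dimensional profiles via translation invariance (sliding) in the tangential direction, and then compare the normal derivatives of these profiles.  The paper packages the last two steps into its \cref{lem:centered_normal_derivative}, whose proof uses the uniqueness result \cref{lem:uniqueness} to force one-dimensionality, exactly as you do.

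The genuine difference is in how the one-dimensional inequality $\partial_n z^+(0,0)>\partial_n z^-(0,0)$ is established.  The paper argues qualitatively: it claims a pointwise ordering between $f_r(\alpha+u)$ and $-f_r(\alpha-u)$, slides to obtain $z_\infty^+>z_\infty^-$, and concludes via the Hopf lemma.  You instead compute the first integrals explicitly, obtaining
\[
|\Phi^+{}'(0)|^2-|\Phi^-{}'(0)|^2=2F_r(r)=\tfrac{1}{6}\bigl[(1-\alpha)(r-\alpha)^2(r+\alpha)-\alpha^3(2-\alpha)\bigr],
\]
and check that this is positive for $\alpha<\tfrac12$ and $r$ near $1$ (indeed $12F_1(1)=1-2\alpha$).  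Your route is more transparent and quantitative; it also makes the role of the hypothesis $\alpha<\tfrac12$ completely explicit, and it sidesteps the need to verify any pointwise comparison of the nonlinearities.  Your treatment of the small drift by continuity of the heteroclinic derivatives in $c$ is equivalent in content to the paper's second limiting argument ($c^\infty_k\to 0$ in \cref{lem:centered_normal_derivative}).

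You correctly flag the one technical point that must be supplied: a bulk lower bound for $\phi^-_{(i)}$ (i.e.\ $\phi^-_{(i)}\to\alpha$ away from $\partial\cE_\Lambda$), without which the blow-up limit $\tilde\Phi^-$ could degenerate to $0$.  The paper asserts this limit behaviour when it invokes \cref{lem:steady_states}, though that lemma is only stated and proved for $\phi^+_{(i)}$; your suggested planar comparison near the outer boundary is the natural way to fill this in, and the same argument works in the paper's presentation.
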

We postpone the proof of this lemma to \cref{sec:steady} below.

\begin{proof}[{\bf Proof of \cref{prop:subsol}}]
Fix any unit vector $\hat v \in \mathbb{S}^1(\R^2)$ and look at any point $\Lambda \hat{v}$ on the boundary of $\mathcal{E}_{\Lambda}$.  Then~\eqref{eq:normal_derivative_ordering} follows from \cref{lem:convergence_steady} and \cref{lem:der_steady_states}.
\end{proof}

\section{Lemmas about the steady solutions}\label{sec:steady}

In this section, we prove \cref{lem:steady_states} and \cref{lem:der_steady_states}, which give the desired behavior for the steady states $\phi^\pm_{(i)}$.

\begin{proof}[{\bf Proof of \cref{lem:steady_states}}]
	The existence of $\phi^\pm$ is an easy result of a fixed point argument using the principle Dirichlet eigenfunction as a sub-solution to obtain non-triviality.  As such, we omit it here.
	
	
	We now show the lower bound for $\phi^+$.  We first show a preliminary lower bound for $\phi^+$ that we will bootstrap to the lower bound~\eqref{eq:lower_bound}. Let $L$ be a constant yet to be determined. Let $(y_L,\eta_L)$ be the location of the maximum of
\begin{equation*}
		e^{- \frac12 c^{\infty} \cdot (y,\eta)} \cos \left( \frac{\pi y}{ 2L}\right) \cos\left( \frac{\pi \eta}{ 2L}\right).
\end{equation*}
Now fix any point $(y_0,\eta_0)$ to be determined and define
	\begin{equation}\label{eq:tilde_psi}
		\tilde \psi(y,\eta) = e^{- \frac12 c^{\infty} \cdot (y+ y_L -y_0,\eta+ \eta_L -\eta_0)} \cos\left( \frac{\pi}{2L} (y+y_L - y_0)\right)\cos\left( \frac{\pi}{2L} (\eta + \eta_L - \eta_0)\right),
	\end{equation}
Notice that, by construction, the maximum of $\tilde \psi$ occurs at $( y_0 ,\eta_0 )$.
	Define
	\begin{equation}
		\psi = \frac{\gamma}{\tilde\psi( y_0,\eta_0)} \tilde\psi,
	\end{equation}
	where $\gamma$ is a small parameter to be chosen later.
It is easy to check that
\begin{equation*}
\begin{split}
- \Delta \psi - c^\infty \cdot \nabla \psi - f_r(\alpha + \psi) \leq \left(\frac{\Vert c^\infty \Vert^2}{4} + \frac{\pi^2}{2 L^2} - \frac{c_r \alpha (r-\alpha)}{2} \right) \psi \leq 0,
\end{split}
\end{equation*}
on the set
\[
	\mathcal{C} = \left\lbrace (y,\eta) \, \vert \, \vert y - (y_0 - y_L)\vert = L \text{ or } \vert \eta - (\eta_0 - \eta_L) \vert  = L \right\rbrace.
\]
	The first inequality follows by a simple computation and by choosing $\gamma$ sufficiently small, depending only on $r$ and $f$, such that
	\[
		\frac{c_r \alpha (r-\alpha)}{2} u \leq f_r(\alpha+u)
	\]
	for all $u \in [0,\gamma]$.
	The last inequality follows by decreasing $c_0$ and increasing $L$, if necessary.
	
	Hence, $\psi$ is sub-solution to $\phi^+$ which satisfies Dirichlet boundary conditions on the square $\mathcal{C}$.
This implies that as long as $\mathcal{C} \subset \mathcal{E}_{\Lambda}$, 
we have $\psi \leq \phi^+$ on $\mathcal{C}$.  We point out that a weak condition for $\mathcal{C} \subset \mathcal{E}_\Lambda$ is that
\begin{equation}\label{eq:weak_lower_bound}
	\dist((y_0,\eta_0), \partial\cE_{\Lambda}) \geq 2L + |y_L| + |\eta_L|.
\end{equation}
This, in turn, implies that
\begin{equation*}
\phi^+(y_0,\eta_0) \geq \gamma,
\end{equation*}
and this may be valid as soon as $(y_0,\eta_0)$ satisfy~\eqref{eq:weak_lower_bound}.
	
	Now we claim that $\phi^+$ tends to $r - \alpha$ locally uniformly in the set $\mathcal{E}_{\Lambda - L}$ as $\Lambda$ and $L$ tend to $\infty$. To prove this, we argue by contradiction.  If this is not true, then we may find a positive $\epsilon > 0$, sequences $L_k \leq \Lambda_k/2$ tending to infinity, and a sequence of points $(y_k,\eta_k) \in  \mathcal{E}_{\Lambda_k - L_k}$ such that $\phi^+(y_k,\eta_k) \leq r - \alpha - \epsilon$.
	
	Let $\phi_k^+(y,\eta) = \phi^+(y + y_k, \eta+\eta_k)$.  Elliptic regularity ensures that $\phi_k^+$ converges locally uniformly in $\mathcal{C}^2$ to a function $\phi_\infty^+$.  By assumption, we have that $\phi_\infty^+(0,0) \leq r - \alpha - \epsilon$.  By our work above, we have that $\phi_\infty^+ \geq \gamma$ everywhere.  By the maximum principle, we have that $\phi_\infty^+ \leq r - \alpha$ everywhere.  Finally, we have that $\phi_\infty^+$ satisfies~\eqref{eq:phi+}.  We may assume that $(0,0)$ is a positive global local minimum for $\phi_\infty$ since, if not, we may simply re-center the equation again, taking limits if necessary.  It is apparent, however, that a solution to~\eqref{eq:phi+} can only have a minimum at the zeros of $f_r(\alpha + \cdot)$, which are $0$ and $r-\alpha$.  Thus, we arrive at a contradiction.  Hence, it must be that $\phi^+$ tends uniformly to $r - \alpha$, finishing the proof of the claim.
\end{proof}

We are now in a position to prove \cref{lem:der_steady_states}, showing the ordering of the normal derivatives for the steady states $\phi^\pm_{(i)}$.

\begin{proof}[{\bf Proof of \cref{lem:der_steady_states}}]

%

We assume the contrary.  Suppose there is a sequence of $\hat{v}_k$ and $\Lambda_k$, with $\Lambda_k$ tending to infinity, such that
\begin{equation}\label{eq:normal_contradiction1}
	\left\vert\partial_n \phi^+\left( \Lambda_k \hat{v}_k \right) \right\vert \leq \left\vert \partial_n \phi^-\left(\Lambda_k\hat{v}_k \right) \right\vert + o_{k \to + \infty}(1),
\end{equation}
By the compactness of the unit circle, $\hat{v}_k$ converges (up to a subsequence) to some unit vector $\hat{v}_\infty$.

Define $(y_k,\eta_k) = \Lambda_k \hat{v}_k$.  We center the solutions on it.  In other words, we define
\begin{equation*}
z_k^-(y,\eta) = \phi^-(y_k + y,  \eta_k + \eta), \qquad z_k^+(y,\eta) = \phi^+(y_k - y, \eta_k - \eta).
\end{equation*}
The new function $z_k^{+}$ satisfies
\begin{equation}\label{eq:z+}
\begin{cases}
- \Delta z_k^+ + c^\infty \cdot \nabla z_k^+ = f_r(\alpha + z_k^+), \qquad &(y-y_k,\eta-\eta_k) \in \mathcal{E}_{\Lambda_k},\medskip\\
z_k^+(y,\eta) = 0,&(y-y_k,\eta - \eta_k) \in \partial \mathcal{E}_{\Lambda_k},
\end{cases}
\end{equation}
We point out that the condition 
\begin{equation*}
\begin{array}{lcl}
(y-y_k,\eta-\eta_k) \in E_{\Lambda_k} & \Longleftrightarrow & \Vert (y,\eta) \Vert^2 - 2 (y,\eta) \cdot  (\Lambda_k \hat{v}_k)  +  \Lambda_k^2 \leq  \Lambda_k^2,\medskip\\
& \Longleftrightarrow & \Vert (y,\eta) \Vert^2 \leq 2 \Lambda_k (y,\eta) \cdot \hat{v}_k .
\end{array}
\end{equation*}
The new function $z_k^{-}$ satisfies
\begin{equation}\label{eq:z-}
\begin{cases}
- \Delta z_k^- - c^\infty \cdot \nabla z_k^- = - f_r(\alpha - z_k^-), \qquad &(y+y_k,\eta+\eta_k) \in \mathcal{A}_{\Lambda_k} ,\medskip\\
z_k^-(y,\eta) = 0,&(y+y_k,\eta+\eta_k) \in \partial \mathcal{E}_{\Lambda_k} ,\medskip\\
z_k^-(y,\eta) = \alpha, &(y+y_k,\eta+\eta_k) \in \partial \mathcal{A}_{\Lambda_k}  \backslash \partial \mathcal{E}_{\Lambda_k} .
\end{cases}
\end{equation}
We point out that the condition 
\begin{equation*}
\begin{array}{lcl}
(y+y_k,\eta+\eta_k) \in A_{\Lambda_k}  & \Longleftrightarrow & \Lambda_k^2 \leq \Vert (y,\eta) \Vert^2 + 2 (y,\eta) \cdot ( \Lambda_k \hat{v}_k ) +  \Lambda_k^2 \leq  4 \Lambda_k^2,\medskip\\
& \Longleftrightarrow & 0 \leq \Vert (y,\eta) \Vert^2 + 2 \Lambda_k  (y,\eta) \cdot \hat{v}_k  \leq  3 \Lambda_k^2,\medskip\\
\end{array}
\end{equation*}

By elliptic regularity theory, we may take local uniform limits to obtain $z^-$ and $z^+$ as the limits of $z^-_k$ and $z^+_k$, respectively, which solve
\begin{equation}\label{eq:z+}
\begin{cases}
- \Delta z^+ + c^\infty \cdot \nabla z^+ = f_r(\alpha + z^+), \qquad &(y,\eta) \in P,\medskip\\
z^+(y,\eta) = 0,&(y,\eta) \in \partial P,
\end{cases}
\end{equation}
and
\begin{equation}\label{eq:z-}
\begin{cases}
- \Delta z^- - c^\infty \cdot \nabla z^- = - f_r(\alpha - z^-), \qquad &(y,\eta) \in P,\medskip\\
z^-(y,\eta) = 0,&(y,\eta) \in \partial P,\\
\end{cases}
\end{equation}
where we have defined the half-plane 
\begin{equation*}
	P \stackrel{\rm def}{=} \left\{ (y',\eta') : (y',\eta') \cdot \hat{v}_\infty > 0\right\}.
\end{equation*}

Due to \cref{lem:steady_states}, we conclude the following about $z^\pm$.  In addition, $z^\pm(y,\eta)$ is uniformly positive away from this boundary.  Furthermore, $z^+(y,\eta)$ converges to $r - \alpha$ and $z^-(y,\eta)$ converges to $\alpha$ as the distance between $(y,\eta)$ and $\partial P$ tends to infinity.  

Finally, we point out that~\eqref{eq:normal_contradiction1} implies that $\partial_n z^+(0,0) \leq \partial_n z^-(0,0)$, where we have removed the absolute values since the sign of both quantities is clearly positive.

This contradicts the following lemma, which we prove in the following section as it requires the results from \cref{lem:uniqueness}.
\begin{lem}\label{lem:centered_normal_derivative}
There exists $\delta > 0$ such that if $\Vert c^\infty \Vert \leq \delta$ then $\partial_n z^+(0,0) > \partial_n z^-(0,0)$. 
%
%
\end{lem}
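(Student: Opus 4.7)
The strategy is to reduce each half-plane problem to a one-dimensional ODE via uniqueness, apply the classical energy identity at zero drift to compare the normal derivatives, and then extend the strict inequality by continuity to small $\|c^\infty\|$.

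By \cref{lem:uniqueness} (proven in the subsequent section), each half-plane problem admits a unique solution with the prescribed asymptotic behavior. Since these problems are invariant under translations parallel to $\partial P$, the unique solution must depend only on $s := (y,\eta) \cdot \hat{v}_\infty$. Writing $z^\pm(y,\eta) = Z^\pm(s)$ and $c := c^\infty \cdot \hat{v}_\infty$, the problems reduce to
\begin{align*}
	-(Z^+)'' + c (Z^+)' &= f_r(\alpha + Z^+), & Z^+(0) &= 0, & Z^+(+\infty) &= r - \alpha,\\
	-(Z^-)'' - c (Z^-)' &= -f_r(\alpha - Z^-), & Z^-(0) &= 0, & Z^-(+\infty) &= \alpha,
\end{align*}
with normal derivatives $\partial_n z^\pm(0,0) = (Z^\pm)'(0) > 0$.

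At $c = 0$, I would multiply each ODE by $(Z^\pm)'$ and integrate on $[0,\infty)$. Using $(Z^\pm)'(\infty) = 0$ (a consequence of $Z^\pm$ converging to nondegenerate zeros of the nonlinearity and standard ODE theory), this gives the energy identities
\[
	\tfrac{1}{2}(Z^+)'(0)^2 = \int_\alpha^r f_r(u)\,du, \qquad \tfrac{1}{2}(Z^-)'(0)^2 = -\int_0^\alpha f_r(u)\,du,
\]
so the desired inequality reduces to $\int_0^r f_r(u)\,du > 0$. A direct computation from the explicit cubic form of $f_r$ yields
\[
	12 \int_0^r f_r(u)\,du = (1-\alpha)(r-\alpha)^2(r+\alpha) - \alpha^3(2-\alpha),
\]
and since the right-hand side is strictly increasing in $r$ and equals $\alpha^3(1 - 2\alpha) > 0$ at $r = 2\alpha$, it is strictly positive for all $\alpha \in (0,1/2)$ and $r \in [2\alpha, 1]$, covering both $r_{(1)} = 1$ and $r_{(2)}$.

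Finally, for small $\|c^\infty\|$, standard elliptic/ODE stability together with the uniform-in-$c$ uniqueness from \cref{lem:uniqueness} ensures continuity of $(Z^\pm)'(0)$ in $c$, so the strict inequality at $c = 0$ persists in a neighborhood $|c| \leq \delta$. The primary obstacle is the first step: invoking uniqueness on an unbounded half-plane with asymptotic (rather than Dirichlet) conditions at infinity, and propagating enough stability of the reduced ODE problem for the perturbation argument in the last step to go through. The energy identity itself and the positivity of $\int_0^r f_r$ are elementary once the 1D reduction is secured.
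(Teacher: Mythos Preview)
Your argument is correct and, in its core step, \emph{different} from the paper's. Both proofs begin the same way: invoke \cref{lem:uniqueness} together with translation invariance parallel to $\partial P$ to reduce to one-dimensional profiles $Z^\pm(s)$, and handle small $\|c^\infty\|$ by a compactness-plus-uniqueness argument that pushes the question to $c=0$. Where you diverge is at $c=0$: the paper uses the pointwise comparison $f_r(\alpha+u)\ \text{vs.}\ -f_r(\alpha-u)$ together with a sliding argument to obtain $z_\infty^+>z_\infty^-$ on the whole half-line, then applies the Hopf lemma at the origin. You instead use the first-integral (energy) identity to compute $(Z^\pm)'(0)$ exactly, reducing the question to $\int_0^r f_r>0$, which you verify directly from the explicit cubic. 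Your computation is correct; in particular the identity $12\int_0^r f_r=(1-\alpha)(r-\alpha)^2(r+\alpha)-\alpha^3(2-\alpha)$ and its positivity for $r\in(2\alpha,1]$, $\alpha\in(0,\tfrac12)$ check out.

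What each approach buys: your route is more elementary and explicitly quantitative, avoiding both the sliding lemma and Hopf; the paper's route yields the stronger conclusion $z_\infty^+>z_\infty^-$ pointwise, not just the derivative inequality at the origin. One small point worth tightening in your last step: ``standard stability'' of $(Z^\pm)'(0)$ in $c$ really amounts to the same compactness argument the paper runs---take a sequence $c_k\to0$, extract $C^1_{\rm loc}$ limits, and check that the limit profiles are nontrivial with the correct asymptotics (so that \cref{lem:uniqueness} applies); the uniform-in-$c$ lower bounds needed for nontriviality come from \cref{lem:steady_states}. Once phrased this way, your perturbation step is literally the paper's contradiction argument.
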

%
Since we have reached a contradiction, it must be that~\eqref{eq:phi_normal} holds for all $\Lambda$ sufficiently large.  This finishes the proof of \cref{lem:der_steady_states}.
\end{proof}

\section{Convergence to the steady states: Proof of \cref{lem:convergence_steady}}\label{sec:convergence_steady}

Before we prove \cref{lem:convergence_steady}, we prove a few useful lemmas.  The first of these is the uniqueness of solutions to the related steady half-plane problem.  We require this to prove \cref{lem:centered_normal_derivative}, stated above.

We also note that \cref{lem:uniqueness} with $c^\infty = 0$ is well-known.  We include the proof of the case with constant $c^\infty$, which is virtually unchanged, for completeness.

\begin{lem}\label{lem:uniqueness}
Let $(y_b,\eta_b) \in \R^2$ and $\hat{v} \in \mathbb{S}^1(\R^2)$.  Define 
\begin{equation*}
P = \{ (y,\eta): 0 \leq (y+y_b, \eta + \eta_b)\cdot \hat{v}\}.
\end{equation*}
For any choice of $c^\infty$, there is at most one solution to
\begin{equation}\label{eq:zlem}
\begin{cases}
- \Delta z^+ + c^\infty \cdot \nabla z^+ = f_r(\alpha + z^+), \qquad &(y,\eta) \in P,\medskip\\
z^+(y,\eta) = 0,&(y,\eta) \in \partial P,
\end{cases}
\end{equation}
which satisfies Dirichlet boundary conditions on $\partial P$ and which tends uniformly to $r - \alpha$ as $\dist((y,\eta), \partial P)$ tends to infinity.

In addition, for any choice of $c^\infty$, there is at most one solution to
\begin{equation}\label{eq:zlem-}
\begin{cases}
- \Delta z^- + c^\infty \cdot \nabla z^- = - f_r(\alpha - z^-), \qquad &(y,\eta) \in P,\medskip\\
z^-(y,\eta) = 0,&(y,\eta) \in \partial P,
\end{cases}
\end{equation}
which satisfies Dirichlet boundary conditions on $\partial P$ and which tends uniformly to $\alpha$ as $\dist((y,\eta), \partial P)$ tends to infinity.
\end{lem}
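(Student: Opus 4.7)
The statement is a classical uniqueness theorem for semilinear elliptic problems on a half-plane with Dirichlet data and a linearly stable limit at infinity. The plan is to implement the sliding method of Berestycki and Nirenberg, adapted to accommodate the constant drift $c^\infty \cdot \nabla$ (which does not disrupt the underlying maximum principles). I sketch the argument for~\eqref{eq:zlem}; the statement for~\eqref{eq:zlem-} is proved identically, since the effective nonlinearity $g(u) := -f_r(\alpha - u)$ on $[0,\alpha]$ has the same qualitative structure (vanishing at the two endpoints, with the target endpoint $\alpha$ linearly stable).

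After a rigid motion I may assume $P = \{y_1 > 0\}$, with the (constant) drift still denoted $c^\infty$. I first establish the bound $0 < z < r - \alpha$ in the interior. The lower bound is the strong maximum principle applied to the non-negative, non-trivial solution $z$ (non-triviality following from $z \to r - \alpha$ at infinity). For the upper bound, if $M := \sup_P z > r - \alpha$, the Dirichlet condition on $\partial P$ and uniform convergence at infinity force $M$ to be attained at an interior maximum $y^\star$, yielding the contradiction $0 \le -\Delta z(y^\star) + c^\infty \cdot \nabla z(y^\star) = f_r(\alpha + M) < 0$.

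The essential quantitative ingredient is the coercivity of the linearization around $r - \alpha$. Setting $\mu := |f_r'(r)| = r(1-\alpha) > 0$ and choosing $\delta > 0$ so small that $f_r'(s) \le -\mu/2$ on $[r-\delta, r]$, uniform convergence gives an $R$ such that every solution satisfies $z > r - \alpha - \delta$ on the tail $\{y_1 > R\}$. On that tail, the difference of any two solutions satisfies a linear elliptic equation whose zeroth-order coefficient is $\ge \mu/2 > 0$, so the maximum principle applies on the unbounded region $\{y_1 > R\}$ with limit zero at infinity.

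Now, given two solutions $z_1, z_2$, define $z_1^\tau(y_1, y_2) := z_1(y_1 + \tau, y_2)$ and
\begin{equation*}
	\tau_\star := \inf\bigl\{\tau \ge 0 : z_1^\tau \ge z_2 \text{ on } P\bigr\}.
\end{equation*}
The tail comparison above, combined with $z_1^\tau \to r - \alpha$ uniformly on $P$ as $\tau \to \infty$ and $z_1^\tau > 0 = z_2$ on $\partial P$, yields $\tau_\star < \infty$. At $\tau_\star$ the non-negative difference $w_\star := z_1^{\tau_\star} - z_2$ satisfies a linear equation with bounded coefficients, so the strong maximum principle gives the dichotomy $w_\star \equiv 0$ or $w_\star > 0$ in the interior of $P$. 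The first case forces $z_1(\tau_\star, y_2) \equiv 0$, contradicting strict positivity of $z_1$ unless $\tau_\star = 0$. The second case is ruled out by combining a Hopf-type estimate along $\partial P$ with the tail coercivity: together they guarantee that $z_1^{\tau_\star - \eps} \ge z_2$ for some small $\eps > 0$, contradicting the minimality of $\tau_\star$. Hence $\tau_\star = 0$, i.e., $z_1 \ge z_2$; swapping roles gives $z_1 = z_2$. The main obstacle is this last step: combining the boundary Hopf estimate with the unbounded-domain comparison in the tail to quantitatively decrease $\tau_\star$ is where the uniform convergence at infinity and the linear stability of $r - \alpha$ are both essential; the constant drift causes no additional difficulty because it is a bounded lower-order term that does not interact badly with either ingredient.
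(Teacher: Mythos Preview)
Your approach is the same sliding method the paper uses, and the overall structure is correct. However, your handling of the second case in the dichotomy (where $w_\star > 0$ in the interior) has a gap that you flag as ``the main obstacle'' but do not actually resolve. The half-plane $P$ is unbounded in the direction \emph{tangent} to $\partial P$ (the $y_2$-direction after your reduction), so the strong maximum principle gives $w_\star > 0$ pointwise but not uniformly: the infimum of $w_\star$ over the strip $\{0 < y_1 \le R\}$ may well be zero, approached along a sequence $(y_1^{(k)}, y_2^{(k)})$ with $|y_2^{(k)}| \to \infty$. A Hopf estimate at each boundary point is likewise only pointwise, and your tail coercivity only controls $\{y_1 > R\}$. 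Without a uniform lower bound on the strip you cannot decrease $\tau_\star$ by a fixed $\eps > 0$ and preserve the ordering. (The same non-compactness issue, incidentally, makes your proof of the upper bound $z < r - \alpha$ incomplete: the supremum need not be attained on an unbounded strip.)

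The paper closes this gap with a recentering/compactness argument: if $\inf_{\{0 < y_1 < R\}} w_\star = 0$, take a minimizing sequence, translate in $y_2$, and pass to a limit using elliptic regularity. The limiting functions still solve the same translation-invariant equation, still satisfy the global ordering, and now touch at an interior point; the strong maximum principle forces them to coincide, which is impossible since the shifted function is strictly positive on $\partial P$ when $\tau_\star > 0$ while the other vanishes there. This compactness step is the standard device for sliding on domains unbounded in a direction of translation invariance, and it is precisely what your sketch needs to become a proof.
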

\begin{proof}[{\bf Proof of \cref{lem:uniqueness}}]
We show the argument for $z^+$ but the argument for $z^-$ is identical.  By rotation and translation, we may assume that $\hat{v} = (1,0)$ and that $(y_b,\eta_b) = (0,0)$.  We will use a sliding method \cite{BerestyckiNirembergSliding}.

We proceed by contradiction and assume that $z_1$ and $z_2$ solve~\eqref{eq:zlem} and satisfy $z_2(y_0,\eta_0) < z_1(y_0,\eta_0)$ for some point $(y_0,\eta_0)$.

Fix $u_0 \in (0, r-\alpha)$ large enough such that
	\[
		\tilde f(u) := u(\alpha + u)(r - \alpha - u)
	\]
	is monotonically decreasing for $u \geq u_0$.  Since $z_1$ and $z_2$ tend uniformly to $r-\alpha$ as the$y$ tends to infinity, we may find $L$ such that if $y > L$, then $z_i(\tau, y, \eta) \geq u_0$ for $i = 1,2$.

We define
\begin{equation*}
	z_{2}^\tau(y,\eta) := z_{2}(y + \tau, \eta).
\end{equation*}	
Since $z_{2}$ converges locally uniformly to $r-\alpha$ as $y$ tends to infinity, we may find $\tau_0$ sufficiently large that $z_{2}^{\tau_0} > z_{1}$ on the set
	\[
		P_L = \left\{(y,\eta) \in P : 0 < y < L\right\}.
	\]
	
	We claim that $z_{2}^{\tau_0} \geq z_{1}$ on $P$.  If not, let $\psi = z_{2}^{\tau_0} - z_{1}$ and let $G = \{ \psi < 0\}$.  It is easy to see that $G \subset P\setminus P_L$, by construction.  Also, it is easy to check that $\psi$ satisfies
\begin{equation*}
- \Delta \psi - c^\infty \cdot \nabla \psi = \underbrace{\left(\frac{\tilde f(z_{2}^{\tau_0}) - \tilde f(z_{1})}{z^{\tau_0}_{2} - z_{1}}\right)}_{:=\hat f(y,\eta)} \psi,
\end{equation*}
	on the set $G$.  Since $\psi$ is zero on $\partial G$ and since $\hat f\leq 0$ by our assumptions on $L$ and $u_0$, the maximum principle ensures that $\psi \equiv 0$ on $G$, which is a contradiction.  Hence, we have that $z^{\tau_0}_{2} \geq z_{1}$ on $P$.
	
	Let 
	\[
		\overline \tau = \inf \{ \tau: z^{\tau}_{2} \geq z_{1} \text{ on } P \},
	\]
	and notice that $\overline \tau \leq \tau_0$.  By continuity, it follows that $z_2^{\overline\tau} \geq z_1$.  We claim that $\overline \tau = 0$, and we proceed by supposing the opposite, that is $\overline \tau > 0$. There are two cases: either $\inf_{P_L} \left(  z^{\overline\tau}_2 - z_1 \right) > 0$, or $\inf_{P_L} \left( z^{\overline\tau}_2 - z_1\right) = 0$.
	
	We handle the former case first.  In this case, elliptic regularity ensures that $z_1,z_2$ are uniformly Lipschitz continuous, and that we may then decrease $\overline \tau$ to a positive constant $\overline \tau' < \overline \tau$ and preserve the fact that $z^{\overline\tau'}_2 \geq z_1$ on $P_L$.  Arguing as above, this implies that $z^{\overline\tau'}_2 \geq z_1$ on $P$.  This is contradicts the definition of $\overline \tau$.
	
	Now we handle the latter case.  Assume that $\inf_{P_L} \left(z_1^{\overline\tau} - z_2\right) = 0$.  Then there exists a sequence of points $(y_k,\eta_k) \in P_L$ such that $z_1^{\overline\tau}(y_k,\eta_k) - z_2(y_k,\eta_k)$ tends to zero.  Defining
\begin{equation*}
\psi_{1,k} (y, \eta) = z_1(y+y_k,\eta+\eta_k), \qquad \psi_{2,k}(y,\eta) = z_2^{\overline\tau}(y+y_k,\eta+\eta_k).
\end{equation*}
	By elliptic regularity, we may, taking a subsequence if necessary, find $\overline\psi_i$ which are the local limits of $\psi_{i,k}$ as $k$ tends to infinity.  In addition, $y_k$ converges to some $\overline y$  since $(y_k,\eta_k) \in P_L$ implies that $y_k$ is bounded.  Hence we have that $\overline\psi_i$ satisfy~\eqref{eq:zlem} on $P_L$ and satisfy that
	\[
		\overline\psi_1(0,0) = \overline \psi_2(0,0).
	\]
	By the maximum principle, this implies that $\overline \psi_1 \equiv \overline \psi_2$ on $P_L$.  On the other hand, since $\overline \tau > 0$, we have that, $\psi_2(y,\eta)$ is positive for $y = - \overline y$ while $\psi_1(y,\eta)$ is zero for $y = -\overline y$.  This is a contradiction.
	
	Hence, it must hold that $\overline \tau = 0$.  By the definition of $\overline \tau$, this implies that $z_2 = z_2^{\tau  =0} \geq z_1$.  This contradicts our original assumption that $z_2(y_0,\eta_0) < z_1(y_0,\eta_0)$.  This finishes our proof.

\end{proof}

With \cref{lem:uniqueness} in hand, we may now conclude the proof \cref{lem:centered_normal_derivative}.

\begin{proof}[{\bf Proof of \cref{lem:centered_normal_derivative}}]
Suppose that this is not true.  Then there exist a sequence $\delta_k$ tending to $0$ as $k$ goes to infinity and a sequence $c^\infty_{k}$ such that
\begin{equation*}
	\Vert c_k^\infty \Vert \leq \delta_k
\end{equation*}
and functions $z_k^+$ and $z_k^-$ solving \eqref{eq:z+} and \eqref{eq:z-} respectively which satisfy
\begin{equation}\label{eq:normal_contradiction}
	\partial_n z_k^+(0,0) \leq \partial_n z_k^-(0,0).
\end{equation}
By elliptic regularity theory, we may take local uniform limits to obtain that $z_k^+$ and $z_k^-$ converge to $z_{\infty}^+$ and $z_{\infty}^-$, respectively, which are the unique solutions to~\eqref{eq:z+} and~\eqref{eq:z-} with $c^\infty = 0$.

Rotating if necessary, we may now assume that $\hat{v} = (1,0)$.  Using \cref{lem:uniqueness}, it is easy to see that $z_{\infty}^+$ and $z_{\infty}^-$ depend only on the $y$-variable. It is easily checked that
\begin{equation*}
(\forall u > 0) \qquad f_r(\alpha + u) \leq -f_r(\alpha - u).
\end{equation*}

Hence, we may use a sliding technique \cite{BerestyckiNirembergSliding} (see also \cref{lem:uniqueness} below) to show that $z_\infty^+ > z_\infty^-$. Finally, the Hopf Lemma implies that $\partial_n z_{\infty}^+ > \partial_n z_{\infty}^-$, which contradicts~\eqref{eq:normal_contradiction}.  This finishes the proof.
\end{proof}

The following lemma addresses the issue of uniqueness for the steady problem~\eqref{eq:phi+} on the bounded ellipse $\cE_\Lambda$.  While we expect uniqueness to hold in this setting, we are unable to prove it because $f_r$ need not satisfy the condition that $f_r(\alpha+ u)/u$ that is usually used for uniqueness results of this type.  As such, we settle for a type of asymptotic uniqueness, stated below.

\begin{lem}\label{lem:close_steady}
For any $\epsilon > 0$, there exists $\Lambda_\epsilon$ such that if $\Lambda \geq \Lambda_\epsilon$ any two positive solutions to~\eqref{eq:phi+} must be $\epsilon$-close in the $L^\infty$ norm.
\end{lem}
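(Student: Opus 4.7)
The plan is a compactness-and-blow-up argument that plays \cref{lem:steady_states} (which pins any positive solution to $r-\alpha$ in the deep interior) against \cref{lem:uniqueness} (which gives uniqueness on the half-plane). I would argue by contradiction: suppose there exist $\epsilon_0 > 0$, a sequence $\Lambda_k \to \infty$, and two positive solutions $\phi_k, \tilde\phi_k$ of~\eqref{eq:phi+} on $\cE_{\Lambda_k}$ with $\|\phi_k - \tilde\phi_k\|_{L^\infty(\cE_{\Lambda_k})} \geq \epsilon_0$. Since the constant $r-\alpha$ is a super-solution with nonnegative boundary trace, the maximum principle forces $0 < \phi_k, \tilde\phi_k \leq r-\alpha$. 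Choose $p_k = (y_k,\eta_k) \in \cE_{\Lambda_k}$ with $|\phi_k(p_k) - \tilde\phi_k(p_k)| \geq \epsilon_0/2$, and set $d_k := \dist(p_k, \partial \cE_{\Lambda_k})$. The analysis splits on the behavior of $d_k$ along a subsequence.

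\emph{Case 1 ($d_k \to \infty$).} I would re-center both solutions at $p_k$ and pass to local $C^2$ limits $\psi, \tilde\psi$ via interior Schauder estimates (the first-order coefficient is constant and $f_r$ is Lipschitz). Both solve $-\Delta \psi - c^\infty \cdot \nabla \psi = f_r(\alpha + \psi)$ on all of $\R^2$. Applying \cref{lem:steady_states} with $\epsilon' \to 0$ gives $\phi_k(p_k), \tilde\phi_k(p_k) \geq r - \alpha - o(1)$, so $\psi(0) = \tilde\psi(0) = r-\alpha$; combined with $\psi, \tilde\psi \leq r-\alpha$, the strong maximum principle forces $\psi \equiv \tilde\psi \equiv r - \alpha$, contradicting $|\psi_k(0) - \tilde\psi_k(0)| \geq \epsilon_0/2$.

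\emph{Case 2 ($d_k$ bounded, $d_k \to d_\infty \in [0,\infty)$).} If $d_\infty = 0$, uniform boundary $C^1$ regularity applies (the curvature of $\partial \cE_{\Lambda_k}$ is $1/\Lambda_k \to 0$, so the relevant Schauder constants are uniform in $k$), yielding $\phi_k, \tilde\phi_k \leq C\,\dist(\cdot, \partial \cE_{\Lambda_k})$ and hence $|\phi_k(p_k) - \tilde\phi_k(p_k)| \leq 2C d_k \to 0$, a contradiction. If $d_\infty > 0$, let $q_k \in \partial \cE_{\Lambda_k}$ be nearest to $p_k$ and re-center at $q_k$ after a rotation aligning the inward normal with a fixed direction. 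The translated domains converge locally to a half-plane $P$, and local uniform $C^2$ limits $\phi_\infty, \tilde\phi_\infty$ solve~\eqref{eq:zlem} on $P$ with Dirichlet boundary data on $\partial P$. The interior lower bound from \cref{lem:steady_states} together with the upper bound $r-\alpha$ transfers to the limit and gives $\phi_\infty, \tilde\phi_\infty \to r-\alpha$ uniformly as $\dist(\cdot, \partial P) \to \infty$, so \cref{lem:uniqueness} applies and forces $\phi_\infty = \tilde\phi_\infty$. This contradicts the persistent gap of at least $\epsilon_0/2$ at the limit of $p_k - q_k$ (which lies at distance $d_\infty > 0$ from $\partial P$).

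\emph{Main obstacle.} The delicate step is the boundary subcase of Case 2: one must verify that the interior lower bound of \cref{lem:steady_states}, which is uniform in the choice of positive solution, genuinely passes to the half-plane limit to supply the uniform-decay hypothesis required by \cref{lem:uniqueness}, and one must also confirm that boundary regularity is uniform in $k$. Both points rest on the flattening of $\partial \cE_{\Lambda_k}$ as $\Lambda_k \to \infty$; once they are in place, the rest is a routine elliptic compactness argument modeled on the proofs of \cref{lem:der_steady_states} and \cref{lem:centered_normal_derivative}.
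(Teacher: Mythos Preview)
Your argument is correct and follows essentially the same route as the paper: a contradiction/compactness scheme in which \cref{lem:steady_states} rules out the deep-interior case and \cref{lem:uniqueness} on the limiting half-plane rules out the near-boundary case. Your treatment is slightly more explicit than the paper's (you separate $d_\infty=0$ via boundary $C^1$ bounds and re-center at the nearest boundary point rather than at $p_k$, and you spell out why the decay hypothesis of \cref{lem:uniqueness} is met), but these are cosmetic differences rather than a genuinely different approach.
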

\begin{proof}[{\bf Proof of \cref{lem:close_steady}}]
	Suppose the opposite, that there is a positive constant $\epsilon$, a sequence of $\Lambda_k$ tending to infinity, and two sequences of functions $\phi_{1,k}$ and $\phi_{2,k}$ which both solve~\eqref{eq:phi+} such that $\|\phi_{1,k} - \phi_{2,k}\|_\infty \geq \epsilon$.  Then we may find a sequence of points $(y_k,\eta_k)$ such that $\phi_{1,k}(y_k,\eta_k) \geq \phi_{2,k}(y_k,\eta_k) + \epsilon$.  We may select $(y_k,\eta_k)$ to be a local maximum of $\phi_{1,k} - \phi_{2,k}$.
		
	Now define functions
	\[
		\tilde \phi_{i,k} (y,\eta) = \phi_{i,k}(y + y_k, \eta +\eta_k) ~~\text{ for } i = 1,2.
	\]
	By elliptic regularity, it follows that $\tilde \phi_{i,k}$ converge, along a subsequence, if necessary, locally uniformly to $\overline \phi_{i}$. 
	
	There are two cases to consider here: either $(y_k,\eta_k)$ remains a finite distance from the boundary of $B_{\Lambda_k}$ or it does not.  The latter cannot happen due to \cref{lem:steady_states}.  Hence it follows that $\overline \phi_{i}$ solve~\eqref{eq:phi+} on a half-space
	\[
		P = \left\{(y,\eta): (y + y_\infty,\eta + \eta_\infty) \cdot \hat{v} > 0\right\}
	\]
	for some unit vector $\hat{v}$ and some translation $(y_\infty, \eta_\infty)$, with Dirichlet boundary conditions on $\partial P$.  From \cref{lem:uniqueness}, we know that \eqref{eq:phi+} has a unique positive solution on $P$.  From this, it follows that $\overline \phi_1 = \overline \phi_2$.  This contradicts the fact that $\overline \phi_1(0,0) \geq \overline \phi_2(0,0) + \epsilon$.  This contradiction finishes the proof of the claim.
\end{proof}

%
%
%
%
%

Lastly, we need one more ingredient in order to prove \cref{lem:convergence_steady}.  This is a uniform lower bound to $v^\pm$ to ensure that when taking limits in the arguments, we do not lose positivity of $v^\pm$.

\begin{lem}\label{lem:uniformly_positive}
	There exists $\Lambda_0$ and $\lambda_0$ such that if $\Lambda \geq \Lambda_0$ and $\lambda \geq \lambda_0$, then $v^+$ and $v^-$ are uniformly positive for any subset $K$ of $\mathcal{E}$ and $\mathcal{A}$, respectively.  Moreover, the lower bound may be chosen in such a way to depend only on the distance between the subset $K$ and the boundary $\partial \mathcal{E}$ and $\partial \mathcal{A}$, respectively.
\end{lem}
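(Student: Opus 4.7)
The plan is to construct, for any interior point sufficiently far from the boundary, an explicit time-independent cosine-Gaussian sub-solution $\psi$, exactly analogous to the one used in the proof of \cref{lem:steady_states}, and slide it underneath $v^\pm$ by the parabolic comparison principle. The only novelty compared to the steady case is that one must absorb the $t$-dependence of the coefficients $c_1$, $c_2$, $d$ in~\eqref{eq:v+}--\eqref{eq:v-}, which is what forces the extra largeness assumption on $\lambda$.

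I will first handle $v^+$. Fix a point $(y_0,\eta_0) \in \cE_\Lambda$ and let $D := \dist((y_0,\eta_0), \partial \cE_\Lambda)$. I select $L = L(D) \leq D/4$ and build the cosine-Gaussian bump $\psi$ of the form~\eqref{eq:tilde_psi}, centered on a square of side $2L$ around $(y_0,\eta_0)$, with a small amplitude $\gamma = \gamma(D) > 0$. The same computation performed in \cref{lem:steady_states} yields, on the supporting square, the steady sub-solution inequality
\[
-\Delta\psi - c^\infty_{(i)} \cdot \nabla\psi - f_r(\alpha + \psi) \leq -\kappa\, \psi,
\]
for some $\kappa = \kappa(D) > 0$, provided $c_0$ is chosen small enough.

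Next I verify that $\psi$ remains a sub-solution for the time-dependent equation~\eqref{eq:v+}. Since $\Theta_T(t) \geq (1 + 3\lambda/4)\underline\theta$ on $[0,T]$ and the support of $\psi$ lies in $\cE_\Lambda$ with $\Lambda \leq \lambda\underline\theta/8$, direct inspection of the explicit formulas for $c_1$, $c_2$, $d$ from \cref{sec:lemma} gives, uniformly in $t$ and on the support of $\psi$,
\[
	|c_1 - c^\infty_{1,(i)}| + |c_2 - c^\infty_{2,(i)}| + |d - 1| = O(1/\lambda).
\]
Since $|\nabla\psi|$ and $|\psi_{yy}|$ are bounded by constants depending only on $L$ and $\gamma$, this gives
\[
	-d\psi_{yy} - \psi_{\eta\eta} - (c_1,c_2)\cdot\nabla\psi - f_r(\alpha+\psi) \leq -\kappa\,\psi + O(1/\lambda)\,\psi,
\]
which is nonpositive for $\lambda \geq \lambda_0$ once $\lambda_0$ is chosen large enough. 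Thus $\psi$ is a time-independent sub-solution to~\eqref{eq:v+} on its supporting square, vanishing on the boundary. Since $v^+(0,\cdot) = \phi^+$ and $\phi^+ \geq \psi$ on the square (by the very elliptic comparison argument used in the proof of \cref{lem:steady_states}), the parabolic comparison principle yields $v^+(t, y_0, \eta_0) \geq \gamma(D)$ for every $t$ in the relevant time interval. As $(y_0,\eta_0)$ was arbitrary with $\dist((y_0,\eta_0),\partial\cE_\Lambda) \geq D$, this gives the uniform lower bound for $v^+$.

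The argument for $v^-$ is structurally identical. The relevant reaction estimate is now
\[
	-f_r(\alpha - v^-) \geq \tfrac{\alpha(1-\alpha)}{2}\, v^-
\]
for small $v^- > 0$, which plays the same role for $v^-$ as $f_r(\alpha + v^+) \geq \tfrac{c_r\alpha(r-\alpha)}{2}\, v^+$ did for $v^+$. I construct the same cosine-Gaussian ansatz $\psi$, now supported on a square lying entirely inside the annulus $\cA_\Lambda$ at distance at least $D$ from both $\partial \cE_\Lambda$ and $\partial \cA_\Lambda \setminus \partial\cE_\Lambda$. The boundary data $v^- = \alpha \geq 0 = \psi$ on $\partial\cA_\Lambda \setminus \partial\cE_\Lambda$ and $v^- = 0 = \psi$ on $\partial \cE_\Lambda$ are compatible with comparison; the initial comparison $\phi^- \geq \psi$ follows from the same sub-solution construction applied to~\eqref{eq:phi-}.

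The main obstacle, in my view, is ensuring that the parameters $L$, $\gamma$, $\kappa$ depend only on $D$ and on structural quantities like $\alpha$, $r$, $f$, independently of $\Lambda$, $\lambda$, and $T$. Without this uniformity, the $O(1/\lambda)$ perturbation in the coefficients cannot be absorbed by a single large choice of $\lambda_0$. The structural ingredient that makes everything work is the strict positivity of the linearization $f_r'(\alpha) = \alpha(1-\alpha)$ at the unstable equilibrium, which produces the positive margin $\kappa$ in the sub-solution inequality and drives both the $v^+$ and $v^-$ constructions.
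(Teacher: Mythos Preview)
Your overall strategy matches the paper's: the paper's proof is essentially omitted, pointing to ``the time dependent eigenvalues of Lemmas~5.1 and~5.2 from~\cite{BHR_acceleration} along with the strategy from \cref{lem:steady_states}.'' So the idea of sliding the cosine--Gaussian bump from \cref{lem:steady_states} underneath $v^\pm$ via parabolic comparison is exactly right in spirit.

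There is, however, a genuine technical gap in your perturbation step. From the boundedness of $|\nabla\psi|$ and $|\psi_{yy}|$ you only obtain
\[
	-d\psi_{yy} - \psi_{\eta\eta} - (c_1,c_2)\cdot\nabla\psi - f_r(\alpha+\psi) \;\leq\; -\kappa\,\psi + C(L,\gamma)\,O(1/\lambda),
\]
with a \emph{constant} error, not $O(1/\lambda)\,\psi$. Indeed, writing $\psi = e^{-\frac12 c^\infty\cdot(y',\eta')}C_yC_\eta$, the perturbation $(c_1-c_1^\infty)\psi_y$ and $(d-1)\psi_{yy}$ both contain terms proportional to $S_yC_\eta$, not to $C_yC_\eta$, and $|S_y|/C_y\to\infty$ at the edges $|y'|\to L$ of the supporting square. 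Hence your displayed inequality fails in a neighbourhood of the boundary of the square, and $\psi$ is \emph{not} a sub-solution there; the comparison principle cannot be invoked as written. This is not an artifact of the specific ansatz: for any Dirichlet test function $\psi$ one has $|\nabla\psi|/\psi\to\infty$ at the boundary, so a first-order perturbation can never be absorbed into $-\kappa\psi$ uniformly.

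The fix the paper has in mind is precisely why it cites the time-dependent eigenvalue lemmas of~\cite{BHR_acceleration}: there one constructs a bump adapted to the actual operator with coefficients $(d,c_1,c_2)$, so that the drift is cancelled exactly rather than treated perturbatively. An equivalent self-contained repair is to take a \emph{time-dependent} exponential weight $e^{-\frac12 a(t)y'-\frac12 b(t)\eta'}$ with $(a(t),b(t))$ chosen to match the spatially constant part of $(c_1,c_2)$ at each $t$, and to absorb the residual $y$-linear drift $y\dot\Theta_T/(2\Theta_T)$ and the variable diffusion $d-1=\eta/\Theta_T$ by a further Gaussian correction in the exponent; the resulting $\psi_t$ contribution is then genuinely a multiple of $\psi$ and of size $O(1/\lambda)$, which \emph{can} be absorbed into $-\kappa\psi$. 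Either route closes the gap; the rest of your argument (initial comparison with $\phi^\pm$, the reaction lower bounds for $v^-$, and the dependence of constants only on $D$) is correct.
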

\begin{proof}[{\bf Proof of \cref{lem:uniformly_positive}}]
This follows easily by using the time dependent eigenvalues of Lemmas 5.1 and 5.2 from~\cite{BHR_acceleration} along with the strategy from \cref{lem:steady_states}.  As such we omit the proof.

\end{proof}

We now finally tackle the convergence result of \cref{lem:convergence_steady}.

\begin{proof}[{\bf Proof of \cref{lem:convergence_steady}}]
	Before we begin, we point out that, by parabolic regularity, it is enough to show that $v^\pm$ converges to $\phi^\pm$ uniformly in $L^\infty$.  In addition, we prove the claim for $v^+$ and $\phi^+$, but the proof is identical for $v^-$ and $\phi^-$.
		
	Fix $\epsilon > 0$.  Choose $\Lambda_\epsilon$ large enough that we may apply \cref{lem:close_steady} to get that all non-trivial solutions to~\eqref{eq:phi+} are within $\epsilon/4$ in the $L^\infty$ norm.
	
	Now we proceed by contradiction. Namely, we assume that there exists $y_k$, $\eta_k$, $\lambda_k$, $T_k$ and $t_k\in [0,T_k]$ such that $\lambda_k$ and $T_k$ tend to infinity and such that
	\[
		|v^+(t_k, y_k, \eta_k) - \phi^+(y_k,\eta_k)| \geq \epsilon.
	\]
	Define $v_k(t,\cdot) = v^+(t+t_k, \cdot)$.
	
	By the compactness of $\mathcal{E}$, we have that $(y_k, \eta_k)$ tends, along a sub-sequence if necessary, to a point $(\overline  y, \overline \eta)$.  In addition, \cref{lem:uniformly_positive} ensures that $v_k$ is uniformly positive on any compact subset of $E$.
	
	There are two cases: either $t_k$ is bounded or not.  If it is bounded, then the choice of initial conditions for $v^+$ along with parabolic regularity, see e.g.~\cite{KrylovHolder}, implies that $v_k$ converges to $\phi^+$.  Hence it must be that $t_k$ tends to infinity.
	
	In this case, it is easy to check that $v^+_k$ falls within $3\epsilon/4$ of a steady state of~\eqref{eq:phi+}.  Indeed, consider the limit $v_\infty^+ (t,y,\eta)$.  First, notice that this must solve the parabolic analogue of~\eqref{eq:phi+}.  Second, notice that, for any $t_0>0$, 
	\[
		w_0(y,\eta) \leq v_\infty(-t_0, y,\eta)
	\]
	for a positive lower bound $w_0$, independent of $t_0$, given by \cref{lem:uniformly_positive}.  Using the linearized equation for $v_\infty$ it is clear that we may find $\underline w_0 \leq w_0$ which is a super-solution to the related elliptic problem~\eqref{eq:phi+}.  Letting $\underline w$ be the solution to the parabolic analogue of~\eqref{eq:phi+} starting from $\underline w_0$ we have that
	\[
		\underline w(t_0,y,\eta) \leq v_\infty(0,y,\eta)
	\]
	by the maximum principle.  By our choice of initial conditions, it must be that $\underline w_\tau > 0$ for all times and for all $y$ and $\eta$.  By the boundedness of $\underline w$, it follows that $\underline w_\tau$ tends uniformly to zero and hence, to a solution of~\cref{eq:phi+}.  We may similarly find an upper bound for $v_\infty(0,y,\eta)$ in terms of a non-trivial solution of~\cref{eq:phi+}.  Hence, choosing $t_0$ large enough and applying\cref{lem:close_steady} with the choice $\epsilon' = \epsilon/8$, we have that $v_\infty$ is within $\epsilon / 4$ of a steady state.
	
	
	Applying \cref{lem:close_steady} again implies that $v^+$ and $\phi^+$ are within $\epsilon /2$, using again our choice of $\Lambda$ above.  However, this contradicts our assumption that $| v^+(0, \overline y, \overline \eta) - \phi^+(\overline y, \overline \eta)| \geq \epsilon$, finishing the proof.

\end{proof}

\bibliography{refs}
\bibliographystyle{plain}

\end{document}